\DeclarePairedDelimiter\abs{\lvert}{\rvert}
\newtheorem{theoremcounter}{Theorem Counter}[section]
\theoremstyle{definition}
\newtheorem{example}[theoremcounter]{Example}
\theoremstyle{plain}
\newtheorem{lemma}[theoremcounter]{Lemma}
\newtheorem{proposition}[theoremcounter]{Proposition}
\newtheorem{corollary}[theoremcounter]{Corollary}
\newtheorem{theorem}[theoremcounter]{Theorem}
\numberwithin{equation}{section}
\newcommand{\N}{\mathbb{N}}
\newcommand{\Z}{\mathbb{Z}}
\newcommand{\Hom}{\operatorname{Hom}}
\newcommand{\End}{\operatorname{End}}
\newcommand{\Proj}{\operatorname{Proj}}
\newcommand{\Cyc}{\operatorname{Cyc}}
\begin{document}

\title[]{Cyclic-quasi-injective for Finite Abelian Groups} 

\author{Yusuke Fujiyoshi} 
\address{Joint Graduate School of Mathematics for Innovation, Kyushu University,
Motooka 744, Nishi-ku, Fukuoka 819-0395, Japan}
\email{fujiyoshi.yusuke.671@s.kyushu-u.ac.jp}




\maketitle

\begin{abstract}
We investigate the conditions for a finite abelian group $G$ under which any
cyclic subgroup $H$ and any group homomorphism $f \in \Hom(H,G)$ can be extended to
an endomorphism $F \in \End(G)$. As a result, we provide necessary and sufficient conditions for such a group $G$ and we compute the number of cyclic subgroups possessing non-extendable homomorphisms.
In addition, we demonstrate that the number of cyclic subgroups that do not satisfy the conditions corresponds to the sum of the maximum jumps in the associated permutations given by $\sum_{\sigma \in S_{n}} \max_{1 \leq i \leq n} \{\sigma(i) - i\}$.

\end{abstract}

\section{Introduction}

A group $G$ is said to be \textit{quasi-injective} if there exists an endomorphism $F \in \End(G)$ such that $F|_H = f$ for every subgroup $H$ of $G$ and every homomorphism $f \in \Hom(H,G)$.
Kil'p~\cite{Kil} (see also \cite{Fuc}) proved that a finite abelian group $G$ is a quasi-injective  if and only if the $p$-component of $G$ is a homocyclic group for any prime $p$. A homocyclic group is a direct product of cyclic groups of the same order, where this order is the power of a prime $p$. It takes the form $(\Z \slash p^{r}\Z)^{l}$ for a prime $p$ and $r,l \in \N$. For finite non-abelian groups, Bertholf and Walls \cite{BW} completely characterized quasi-injective groups. They showed that such groups are supersoluble, metabelian, and every subgroup is a $T$-group (i.e., every subnormal subgroup is normal). Extending this line of research to infinite groups, Tomkinson \cite{Tom} investigated quasi-injective groups in the soluble and locally finite cases. He demonstrated that, analogous to the finite case, such groups are $T$-groups.

In this paper, we consider a weakened version of the quasi-injective group condition, which we call a \textit{cyclic-quasi-injective group}. A group $G$ is said to be \textit{cyclic-quasi-injective} if there exists an endomorphism $F \in \End(G)$ such that $F|_H = f$ for every cyclic subgroup $H$ of $G$ and every homomorphism $f \in \Hom(H, G)$. Based on this, we define $X(G)$ as the set of cyclic subgroups $H$ of $G$ such that there exists a homomorphism $f \in \Hom(H, G)$ that satisfies $F|_H \neq f$ for any endomorphism $F \in \End(G)$. Especially, $X(G) = \emptyset $ if and only if $G$ is a cyclic-quasi-injective group.
We will establish the conditions under which a finite abelian group $G$ becomes a cyclic-quasi-injective group by determining the number of elements in $X(G)$.

The main results of this paper are given by \cref{thm:1.1} and \cref{thm:1.3}.

\begin{theorem} \label{thm:1.1}
Let $G$ be a finite abelian group. Then $G$ is a cyclic-quasi-injective group if and only if the $p$-component of $G$ is a homocyclic group for any prime $p$.
\end{theorem}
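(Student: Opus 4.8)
The plan is to split into the two implications and to reduce everything to the primary ($p$-group) case via the canonical decomposition $G = \bigoplus_p G_p$ into $p$-components. Since any cyclic subgroup $H \le G$ splits as $H = \bigoplus_p (H \cap G_p)$ with each $H \cap G_p$ cyclic, since every $f \in \Hom(H,G)$ splits componentwise (a subgroup of $p$-power order can only map into $G_p$), and since $\End(G) = \prod_p \End(G_p)$ because there are no nonzero homomorphisms between components of coprime order, the property of being cyclic-quasi-injective passes both ways: $G$ is cyclic-quasi-injective if and only if each $G_p$ is. So it suffices to settle the question for a single finite abelian $p$-group.

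For the ``if'' direction I would simply invoke Kil'p's theorem (cited above): if every $p$-component of $G$ is homocyclic, then $G$ is quasi-injective, and quasi-injectivity is a strictly stronger requirement than cyclic-quasi-injectivity, since it asks that \emph{every} homomorphism from \emph{every} subgroup extend, in particular those from cyclic subgroups. Hence homocyclic components force $G$ to be cyclic-quasi-injective, and no further argument is needed here.

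The substance lies in the ``only if'' direction, which I would prove by contraposition: assuming some component $G_p$ is not homocyclic, I exhibit one cyclic subgroup and one homomorphism on it admitting no endomorphic extension. Write $G_p = \bigoplus_{i=1}^{k} \Z/p^{e_i}\Z$ with $e_1 \ge \cdots \ge e_k$ and, by non-homocyclicity, $r := e_1 > e_k =: s \ge 1$. Let $g_1$ and $g_k$ generate the first and last cyclic factors, put $H = \langle p^{\,r-s} g_1 \rangle$, a cyclic subgroup of order $p^s$, and define $f \in \Hom(H, G_p)$ by $f(p^{\,r-s} g_1) = g_k$; this is well defined because $p^{\,r-s} g_1$ and $g_k$ both have order exactly $p^s$.

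The key point, and the one place where non-homocyclicity enters, is a height obstruction: every endomorphism preserves the subgroups $p^m G_p$, so $F(p^{\,r-s} g_1) = p^{\,r-s} F(g_1) \in p^{\,r-s} G_p$, whence an extension of $f$ would force $g_k \in p^{\,r-s} G_p$. But the $k$-th coordinate of $p^{\,r-s} G_p$ is $p^{\,r-s}\,\Z/p^{s}\Z$, which contains no generator of $\Z/p^s\Z$ since $r - s \ge 1$; thus $g_k \notin p^{\,r-s} G_p$, a contradiction. Hence $H \in X(G_p)$, and the same $H$ and $f$ (viewing $g_k \in G_p \subseteq G$) witness $H \in X(G)$, because the $G_p$-component of any putative extension in $\End(G)$ would already have to extend $f$. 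I expect the only delicate bookkeeping to be the componentwise reduction and the verification that the chosen element has the stated order and height; the conceptual core is the single observation that endomorphisms cannot lower $p$-height, so a high-height element cannot be sent to a low-height element of the same order.
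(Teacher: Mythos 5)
Your proof is correct, but it reaches \cref{thm:1.1} by a genuinely different route than the paper. The paper deduces the ``only if'' direction from the counting formula of \cref{thm:1.3}: in \cref{lemma:3.4} it observes that for a non-homocyclic $p$-group the term $T_{2}(G)$ is strictly positive, hence $X(G_{p}) \neq \emptyset$, and \cref{lemma:3.1} and \cref{lemma:3.2} then transport a witness from $G_{p}$ to $G$ --- so the qualitative statement falls out of the full quantitative machinery of Section 2 (\cref{prop:2.1}, \cref{cor:2.2}, \cref{lemma:2.3}, \cref{prop:2.4}, \cref{lemma:2.5}). You instead produce an explicit witness: $H = \langle p^{r-s}g_{1}\rangle$ with $f(p^{r-s}g_{1}) = g_{k}$, killed by the observation that any $F \in \End(G_{p})$ satisfies $F(p^{r-s}g_{1}) = p^{r-s}F(g_{1}) \in p^{r-s}G_{p}$, while $g_{k}$, a generator of a factor of order $p^{s}$ with $r > s$, has $p$-height zero and so lies outside $p^{r-s}G_{p}$. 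This height obstruction is essentially the content of inequality \cref{(2.2)} in \cref{prop:2.1}, specialized to a single well-chosen cyclic subgroup, but your argument is self-contained and does not need \cref{thm:1.3} at all. Your componentwise reduction (cyclic subgroups, homomorphisms, and endomorphisms all split over the primes because there are no nonzero maps between components of coprime order) plays the role of the paper's \cref{lemma:3.1} and \cref{lemma:3.2}, and your ``if'' direction --- Kil'p's theorem gives quasi-injectivity, which trivially implies cyclic-quasi-injectivity --- supplies explicitly what the paper leaves implicit (the paper could alternatively read it off from \cref{thm:1.3}, whose three sums vanish when $n=1$). The trade-off: the paper's longer route yields the exact value of $\# X(G)$ and $\# X(G)\slash{\sim}$, of which \cref{thm:1.1} is the statement that they vanish exactly for products of homocyclic groups; your route proves only \cref{thm:1.1}, but in a few lines and with a concrete non-extendable homomorphism.
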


We will introduce several definitions to carry out the proof. Here, $p$ denotes a prime number throughout this paper. For $ r \in \N $, let $ Z(p^r) = \Z \slash p^{r} \Z $. Furthermore, for $ x \in Z(p^{r}) $, we define $v_{p}(x)$ as the largest non-negative integer $k$ such that $x$ can be expressed as $x =  mp^k$ for some integer $m$ that is not divisible by $p$. We set $v_{p}(0) = r$.
Furthermore, let $ G = \bigoplus_{i=1}^{n} Z(p^{m_{i}}) $. Let $ H $ and $ H' $ be cyclic subgroups of $ G $, and let $ h_{i} $ and $ h_{i}' $ denote the $i$-th components of the generators of $ H $ and $ H' $, respectively, where $ h_{i}, h_{i}' \in Z(p^{m_{i}}) $. We define an equivalence relation by
\[
H {\sim} H' \ \overset{\text{def}}{\Longleftrightarrow}\  \forall i \in \{ 1,2,\cdots, n\},\ v_{p}(h_{i}) = v_{p}(h'_{i}).
\]
From \Cref{lemma:2.5}, this equivalence relation does not depend on the choice of representatives. We consider $X(G) \slash {\sim}$ to investigate $X(G)$.
\vskip\baselineskip

\begin{example}
Let $G = Z(p^{2}) \oplus Z(p^{5})$. Then $\# X(G) = 3p + 1$ and $\# X(G) \slash {\sim} = 7$ where $\#$ denotes the number of elements in a set.
\end{example}

Using $1 \leq t \leq p-1$, we have
\begin{align*}
X(G) &= \{ \langle (tp,p) \rangle,\ \langle (tp,p^2) \rangle,\ \langle (tp,p^3) \rangle,\ \langle (0,p) \rangle,\ \langle (0,p^2) \rangle,\ \langle (0,p^3) \rangle,\ \langle (0,p^4) \rangle \}, \\
X(G)\slash {\sim} &= \{ [\langle (p,p) \rangle],\ [\langle (p,p^2) \rangle],\ [\langle (p,p^3) \rangle],\ [\langle (0,p) \rangle],\ [\langle (0,p^2) \rangle],\ [\langle (0,p^3) \rangle],\ [\langle (0,p^4) \rangle] \}.
\end{align*}
Therefore, we obtain
\[
\# X(G) = 3(p-1) + 4 = 3p+1,\ \ \# X(G) \slash {\sim} = 7.
\]

For a finite abelian group $G$, the explicit formulas for $\# X(G)$ and $\# X(G) \slash {\sim}$ are given below.
\vskip\baselineskip

\begin{theorem} \label{thm:1.3}
Let $G = \displaystyle \bigoplus_{i=1}^{n} Z(p^{m_{i}})^{\lambda_{i}} \ (1 \leq m_{i} < m_{i+1},\lambda_{i} \in \N)$ and let $\abs{\lambda}_{j} = \sum_{i=1}^{j} \lambda_{i}$. Then we obtain
\[
\# X(G) \slash {\sim} = S_{1}(G) + S_{2}(G) + S_{3}(G)
\]
where
\begin{align*}
S_{1}(G) &= \sum_{1 \leq k < m_{1}} k^{\lambda_{1}} \bigg( (k+1)^{\abs{\lambda}_{n} - \lambda_{1}} - k^{\abs{\lambda}_{n} - \lambda_{1}} \bigg),\\
S_{2}(G) &= \sum_{i=1}^{n-1} \sum_{m_{i} \leq k < m_{i+1}} m_{i}^{\lambda_{i}} 
\bigg( \prod_{j=1}^{i-1} (m_{j}+1)^{\lambda_{j}} \bigg)
\bigg( (k+1)^{\abs{\lambda}_{n} - \abs{\lambda}_{i}} - k^{\abs{\lambda}_{n} - \abs{\lambda}_{i}} \bigg),\\
S_{3}(G) &= \sum_{i=1}^{n-1} \sum_{m_{i} < k < m_{i+1}} 
k^{\lambda_{i+1}} \bigg((m_{i}+1)^{\lambda_{i}} - m_{i}^{\lambda_{i}} \bigg)
\bigg( \prod_{j=1}^{i-1} (m_{j}+1)^{\lambda_{j}} \bigg)
\bigg( (k+1)^{\abs{\lambda}_{n} - \abs{\lambda}_{i+1}} - k^{\abs{\lambda}_{n} - \abs{\lambda}_{i+1}} \bigg).
\end{align*}

Furthermore, we obtain
\[
\# X(G) = T_{1}(G) + T_{2}(G) + T_{3}(G)
\]
where
\begin{align*}
T_{1}(G) &= \sum_{1 \leq k < m_{1}} p^{(k-1)\lambda_{1}} \cdot
\frac{p^{k(\abs{\lambda}_{n}- \lambda_{1})} - p^{(k-1)(\abs{\lambda}_{n}- \lambda_{1})}}{p^{k-1}(p-1)},\\
T_{2}(G) &= \sum_{i=1}^{n-1} \sum_{m_{i} \leq k < m_{i+1}} 
\bigg( \prod_{j=1}^{i-1} p^{m_{j}\lambda_{j}} \bigg) p^{(m_{i}-1)\lambda_{i}} \cdot
\frac{p^{k(\abs{\lambda}_{n}- \abs{\lambda}_{i})} - p^{(k-1)(\abs{\lambda}_{n}- \abs{\lambda}_{i})}}{p^{k-1}(p-1)},\\
T_{3}(G) &= \sum_{i=1}^{n-1} \sum_{m_{i} < k < m_{i+1}} 
\bigg( \prod_{j=1}^{i-1} p^{m_{j}\lambda_{j}} \bigg) 
(p^{m_{i}\lambda_{i}} - p^{(m_{i}-1)\lambda_{i}})
p^{(k-1)\lambda_{i+1}} \cdot
\frac{p^{k(\abs{\lambda}_{n}- \abs{\lambda}_{i+1})} - p^{(k-1)(\abs{\lambda}_{n}- \abs{\lambda}_{i+1})}}{p^{k-1}(p-1)}.
\end{align*}
\end{theorem}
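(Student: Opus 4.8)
The plan is to reduce both identities to a single combinatorial count over $\sim$-classes, organized by the order of a generator, and then to pass from classes to subgroups by a uniform weighting. First I would invoke the extension criterion from Section~2: writing a generator of $H$ as $h = (h_{1},\dots,h_{N})$ with $N = \abs{\lambda}_{n}$ the total number of cyclic factors, a homomorphism $f \in \Hom(H,G)$ with $f(h) = y$ extends to $\End(G)$ precisely when $v_{p}(y_{i}) \ge E_{i}$ for every coordinate $i$, where each height $E_{i}$ is determined solely by the valuation vector $(v_{p}(h_{1}),\dots,v_{p}(h_{N}))$. Hence $H \in X(G)$ iff some admissible target $y$ (one whose order divides that of $h$) violates $v_{p}(y_{i}) \ge E_{i}$ for some $i$, and this depends only on the valuation vector. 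By \Cref{lemma:2.5} that data is exactly what the class $[H]$ records, so membership in $X(G)$ is $\sim$-invariant and $\# X(G)/{\sim}$ counts the valuation vectors that are bad in this sense. I would then package each vector by the exponent $s$ with $\#\langle h\rangle = p^{s}$, where $s = \max_{i}(m_{(i)} - v_{p}(h_{i}))$ and $m_{(i)}$ is the exponent of the factor containing the $i$-th coordinate; this is the parameter written $k$ in the statement.

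Second, I would prove that each bad class is detected by a single \emph{witnessing} block, and that the location of $s$ in the chain $m_{1} < \dots < m_{n}$ forces which block this is. This yields the trichotomy. In regime (i), $1 \le s < m_{1}$: the order is below every factor and the witness is the first block, which must avoid the maximal order (its coordinates carry $k$ admissible valuations, giving $k^{\lambda_{1}}$) while the remaining blocks realize $s$ (at least one coordinate attaining the maximum, giving $(k+1)^{\abs{\lambda}_{n}-\lambda_{1}} - k^{\abs{\lambda}_{n}-\lambda_{1}}$); this is $S_{1}$. In regime (ii), $m_{i} \le s < m_{i+1}$ with the $i$-th block as witness: that block must have every coordinate of valuation $\ge 1$ (the factor $m_{i}^{\lambda_{i}}$), the earlier blocks are unconstrained (the product $\prod_{j<i}(m_{j}+1)^{\lambda_{j}}$), and the larger blocks realize $s$; this is $S_{2}$. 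In regime (iii), $m_{i} < s < m_{i+1}$ with the $(i+1)$-st block as witness: the $i$-th block must contain a valuation-$0$ coordinate (the factor $(m_{i}+1)^{\lambda_{i}} - m_{i}^{\lambda_{i}}$), the $(i+1)$-st block must avoid order $p^{s}$ (the factor $k^{\lambda_{i+1}}$), and the still-larger blocks realize $s$; this is $S_{3}$. The valuation-$0$ dichotomy in the $i$-th block is precisely what makes (ii) and (iii) disjoint, so the three counts add to $\# X(G)/{\sim} = S_{1} + S_{2} + S_{3}$.

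Third, I would convert this class count into $\# X(G)$. All generators sharing a fixed valuation vector have the same order $p^{s}$ and lie in cyclic subgroups each possessing exactly $\varphi(p^{s}) = p^{s-1}(p-1)$ generators, so the number of subgroups in a class equals its number of admissible generators divided by $p^{s-1}(p-1)$. Replacing each valuation-count factor in $S_{1},S_{2},S_{3}$ by the corresponding element-count factor—$p^{m_{j}}$ for a free coordinate, $p^{m_{i}-1}$ for the constraint $v_{p} \ge 1$, $p^{k-1}$ for order below $p^{k}$, and $p^{kr} - p^{(k-1)r}$ for a maximum realized across $r$ coordinates—and dividing by $p^{k-1}(p-1)$ produces $T_{1},T_{2},T_{3}$ term by term.

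The main obstacle is the case analysis underlying the trichotomy: making $E_{i}$ explicit enough to read off the witnessing conditions, and then proving that every bad class has a witnessing block of exactly the predicted index, that the largest block can never witness (so the ranges stop strictly below $m_{n}$), and that no class is counted in two regimes—the disjointness of (ii) and (iii) via the valuation-$0$ condition in the $i$-th block, together with the exclusion of $k = m_{i}$ from $S_{3}$. Verifying the boundary values $s = m_{i}$ is the delicate point; once the witnessing dichotomy is pinned down, the counting and the passage to $\# X(G)$ are routine.
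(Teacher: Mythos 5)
Your proposal is correct and takes essentially the same route as the paper: reduce to counting deficiency vectors $\delta$ via the extension criterion (\cref{prop:2.1}, \cref{cor:2.2}, \cref{lemma:2.3}), characterize the bad vectors by the witnessing-block trichotomy according to where $\|\delta\|$ sits among the $m_{i}$ (this is condition 3 of \cref{prop:2.4}), count each regime by the same product formulas, and convert classes to subgroups by the weight $\bigl(\prod_{i}\varphi(p^{\delta_{i}})\bigr)/\varphi(p^{\|\delta\|})$ of \cref{lemma:2.5}. The only difference is cosmetic: you make regimes (ii) and (iii) disjoint at the outset via the valuation-zero condition on the $i$-th block, whereas the paper works with the overlapping sets $Y_{2}(G)$, $Y_{3}(G)$ and subtracts $Y_{4}(G) = Y_{2}(G) \cap Y_{3}(G)$ by inclusion--exclusion, which produces the same three terms after rearrangement.
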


\section{Proof of Theorem 1.3}

In this section, we prove \cref{thm:1.3}. For a finite abelian group $G$ whose order is a power of $p$, we determine the conditions under which a cyclic subgroup has a non-extendable homomorphism.
\\

\begin{proposition} \label{prop:2.1}
Let $G = \displaystyle \bigoplus_{i=1}^{n} Z(p^{m_{i}})^{\lambda_{i}} \ (1 \leq m_{i} < m_{i+1},\lambda_{i} \in \N)$ and let $H, H'$ be cyclic subgroups of $G$. If $H \in X(G)$ and $H {\sim} H'$, then $H' \in X(G)$.
\end{proposition}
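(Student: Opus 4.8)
The plan is to show that membership in $X(G)$ is a $\sim$-invariant by transporting a non-extendable homomorphism along a suitable automorphism of $G$. Write $G = \bigoplus_{i=1}^{n} Z(p^{m_i})$ and fix generators $h = (h_1, \ldots, h_n)$ of $H$ and $h' = (h'_1, \ldots, h'_n)$ of $H'$. By \Cref{lemma:2.5} the hypothesis $H \sim H'$ guarantees $v_p(h_i) = v_p(h'_i)$ for every $i$, independently of the chosen generators, so I may work with these fixed generators.

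First I would construct a diagonal automorphism $\psi \in \operatorname{Aut}(G)$ sending $h'$ to $h$. The elementary fact driving this is that two elements of $Z(p^{m})$ with equal $p$-adic valuation differ by a unit multiple: if $v_p(h_i) = v_p(h'_i) = v$, then either $v = m_i$, in which case both components vanish and I take $u_i = 1$, or, writing $h_i = p^{v} a_i$ and $h'_i = p^{v} b_i$ with $a_i, b_i$ prime to $p$, I choose $u_i \in (Z(p^{m_i}))^{\times}$ with $u_i \equiv a_i b_i^{-1} \pmod{p^{m_i - v}}$, so that $u_i h'_i = h_i$. Letting $\psi$ act on the $i$-th summand as multiplication by $u_i$ gives an automorphism with $\psi(h') = h$, hence $\psi(H') = \langle \psi(h') \rangle = H$ and $\psi^{-1}(H) = H'$.

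Next I would transport a witness of non-extendability. Since $H \in X(G)$, I choose $f \in \Hom(H, G)$ admitting no extension to $\End(G)$, and set $f' := f \circ (\psi|_{H'}) \in \Hom(H', G)$, which is well defined because $\psi|_{H'}$ is an isomorphism onto $H$. I claim $f'$ is non-extendable, whence $H' \in X(G)$. Indeed, if some $F' \in \End(G)$ satisfied $F'|_{H'} = f'$, then $F := F' \circ \psi^{-1} \in \End(G)$ would satisfy, for every $x \in H$, $F(x) = F'(\psi^{-1}(x)) = f'(\psi^{-1}(x)) = f(x)$, using $\psi^{-1}(x) \in H'$; thus $F|_H = f$, contradicting the choice of $f$.

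The argument is short because the only genuinely group-theoretic input is the construction of $\psi$, the rest being formal conjugation of endomorphisms. The main point to get right is precisely that an automorphism respecting the ambient direct-sum decomposition can carry one generator to the other exactly when their component valuations agree, i.e.\ that the combinatorial relation $\sim$ is realized by an honest automorphism of $G$. The only mild obstacle is the unit-multiple normalization inside each $Z(p^{m_i})$: handling the degenerate case $h_i = h'_i = 0$ and verifying that $u_i$ can be taken to be a genuine unit modulo $p^{m_i}$ (which holds since $u_i$ is prime to $p$ whenever $v < m_i$). I note finally that the construction is symmetric in $H$ and $H'$, so the same reasoning yields the full invariance $H \in X(G) \Leftrightarrow H' \in X(G)$.
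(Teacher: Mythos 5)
Your proof is correct, but it takes a genuinely different route from the paper's. You realize the combinatorial relation $\sim$ by an actual diagonal automorphism $\psi \in \operatorname{Aut}(G)$ (coordinatewise multiplication by units $u_i$, with the zero components handled separately), and then transport a non-extendable $f \in \Hom(H,G)$ to $f \circ \psi|_{H'} \in \Hom(H',G)$; non-extendability is preserved because any extension $F' \in \End(G)$ of the transported map would give the extension $F' \circ \psi^{-1}$ of $f$. The only computation is the normalization $u_i b_i \equiv a_i \pmod{p^{m_i - v}}$, and you correctly note that this congruence already forces $u_i$ to be prime to $p$, hence a unit. The paper instead proves the proposition by explicitly analyzing when an extension $F$ of $f$ exists: writing $\alpha_i = v_p(h_i)$, it reduces extendability to an inequality purely in terms of the valuations $\alpha_i$ (\Cref{(2.2)}), from which the $\sim$-invariance is immediate. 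That computation is heavier than your argument, but it is not wasted effort in the context of the paper: \cref{cor:2.2}, and through it the whole counting machinery behind \cref{thm:1.3}, is built directly on that explicit criterion, which your automorphism argument does not produce. So your approach buys a short, conceptual proof of the invariance statement (indeed of the symmetric statement $H \in X(G) \Leftrightarrow H' \in X(G)$, valid whenever $\sim$-equivalent cyclic subgroups are conjugate under $\operatorname{Aut}(G)$), while the paper's approach buys the quantitative description of $X(G)$ that the rest of the paper needs.
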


\begin{proof}
Let $H = \langle (h_{1},\cdots,h_{n}) \rangle$ be a cyclic subgroup of $G$, and define $\boldsymbol{h} = (h_{1},\cdots,h_{n})$, $\boldsymbol{x} = (x_{1},\cdots,x_{n})$, and $\boldsymbol{e}_{i} = (0,\cdots,0, \underset{i\text{-th}}{1} ,0,\cdots,0) \in G$. Since $G$ is a direct product of cyclic groups, the existence of an extension $F \in \End(G)$ of $f$ is equivalent to identifying $F(\boldsymbol{e}_{i})$ for each $i$.
Assume that an extension $F$ of $f$ exists. Let the order of $\boldsymbol{h}$ be $p^{u}$, and set $\alpha_{i} = v_{p}(h_{i})$ for each $i$. Additionally, suppose that $f(\boldsymbol{h}) = \boldsymbol{x}$.
Since $p^{u} \boldsymbol{h} = \boldsymbol{0}$, we have
\begin{align*}
p^{u} h_{i} = h_{i}'  p^{\alpha_{i} + u} \equiv 0 \mod p^{m_i} \quad (1 \leq i \leq n,\ h_{i} = h_{i}' p^{\alpha_{i}}).
\end{align*}
From this, it follows that $u = \displaystyle \max_{1 \leq i \leq n} \{ m_{i} - \alpha_{i} \}$. 
Moreover, since $f$ is a group homomorphism and $f(\boldsymbol{h}) = \boldsymbol{x}$, we obtain
\[
p^{u} \boldsymbol{x} = p^{u} f(\boldsymbol{h}) = f(p^{u} \boldsymbol{h}) = \boldsymbol{0}.
\]
Thus,
\[
p^{u} x_{i} \equiv 0 \mod p^{m_{i}} \quad (1 \leq i \leq n)
\]
holds. Therefore, we can write $v_{p}(x_{i}) = \beta_{i} + \max \{ 0, m_{i} - u \}$, where $0 \leq \beta_{i} \leq m_{i} -  \max \{ 0, m_{i} - u \}$.
Next, we examine the conditions that $F(\boldsymbol{e}_{l})$ must satisfy. Suppose that $F(\boldsymbol{e}_{l}) = (a_{1}^{(l)},\cdots,a_{n}^{(l)})$. Since the order of $\boldsymbol{e}_{l}$ is $p^{m_{l}}$, we have
\[
p^{m_{l}} a_{i}^{(l)} \equiv 0 \mod p^{m_i}\quad (1 \leq i \leq n).
\]
As a result, $v_{p}(a_{i}^{(l)}) = \gamma_{i} + \text{max} \{ 0,m_{i} - m_{l} \}$, where $0 \leq \gamma_{i} \leq m_{i} - \text{max} \{ 0,m_{i} - m_{l} \}$. Furthermore, since $F(\boldsymbol{h}) = f(\boldsymbol{h}) = \boldsymbol{x}$, we have
\[
\boldsymbol{x} = \sum_{l=1}^{n} h_{l} F(\boldsymbol{e}_{l}).
\]
Consequently, for each $i$, we have
\begin{align*}
x_{i} \equiv \sum_{l=1}^{n} h_{l} a_{i}^{(l)} \mod p^{m_i},
\end{align*}
which implies that
\begin{align*}
x_{i}'p^{\beta_{i} + \max \{ 0, m_{i} - u \}} \equiv \sum_{l=1}^{n} h_{l}' a_{i}^{(l)'} p^{\alpha_{l} + \gamma_{l} + \text{max} \{ 0, m_{i} - m_{l} \}} \mod p^{m_{i}}
\end{align*}
must hold. Here, $x_{i} = x_{i}'p^{\beta_{i} + \max \{ 0, m_{i} - u \}}$ and $a_{i}^{(l)} = a_{i}^{(l)'} p^{\gamma_{l} + \text{max} \{ 0, m_{i} - m_{l} \}}$.
From this, it follows that for any $i$ and any $l$, there exists a $\gamma_{l}$ such that
\begin{align*}
\beta_{i} + \max \{ 0, m_{i} - u \} \geq \alpha_{l} + \gamma_{l} + \text{max} \{ 0, m_{i} - m_{l} \}
\end{align*}
is satisfied, in which case the homomorphism $f$ can be extended.

From this argument, $f$ cannot be extended if there exist some $s$ and $l$ such that, for any $\gamma_{l}$, the inequality
\begin{align*}
\beta_{s} + \max \{ 0, m_{s} - u \} < \alpha_{l} + \gamma_{l} + \text{max} \{ 0, m_{s} - m_{l} \}
\end{align*}
holds. In other words, $f$ cannot be extended if there exist some $s$ and $l$ such that
\begin{align*}
\beta_{s} + \max \{ 0, m_{s} - u \} < \alpha_{l} + \text{max} \{ 0, m_{s} - m_{l} \}.
\end{align*}
Furthermore, this is equivalent to the condition that there exists some $s$ such that
\begin{align} \label{(2.1)}
\beta_{s} + \max \{ 0, m_{s} - u \} < 
\underset{1 \leq l \leq n}{\text{min}} \{ \alpha_{l} + \text{max} \{ 0, m_{s} - m_{l} \} \}.
\end{align}
This inequality must be satisfied for any homomorphism $f \in \Hom(H, G)$, and since $\beta_{s}$ is determined by the homomorphism $f$, \Cref{(2.1)} must hold for any $\beta_{s}$. Therefore, if there exists some $s$ such that
\begin{align} \label{(2.2)}
\max \{ 0, m_{s} - u \} < 
\underset{1 \leq l \leq n}{\text{min}} \{ \alpha_{l} + \text{max} \{ 0, m_{s} - m_{l} \} \},
\end{align}
then $H$ has a homomorphism that cannot be extended. From this, we see that whether $f$ can be extended depends only on the number of times each component of the subgroup is divisible by $p$.
\end{proof}

Here, we introduce several definitions. Let $G = \displaystyle \bigoplus_{i=1}^{n} Z(p^{m_i}) \ \ ( 1 \leq m_{i} \leq m_{i+1})$. For $[\langle (p^{\alpha_{1}},\cdots,p^{\alpha_{n}}) \rangle] \in X(G) \slash {\sim}$, define the map $\Phi$ by
\[
\Phi([\langle (p^{\alpha_{1}},\cdots,p^{\alpha_{n}}) \rangle]) 
= (m_{1} - \alpha_{1},\cdots,m_{n} - \alpha_{n}) \in \Z_{\geq 0}^{n},
\]
and let $Y(G) = \text{Image}(\Phi)$. Then $\Phi:X(G) \slash {\sim} \rightarrow Y(G)$ is a bijection.
Furthermore, for $(a_{1},\cdots,a_{n})$, $(b_{1},\cdots,b_{n}) \in \Z_{\geq 0}^{n}$, define
\[
(a_{1},\cdots,a_{n}) \preceq (b_{1},\cdots,b_{n}) 
\overset{\text{def}}{\Longleftrightarrow}  
\forall i \in \{ 1,2,\cdots, n\},\ a_{i} \leq b_{i}.
\]
Additionally, for $a = (a_{1},\cdots,a_{n}) \in \Z_{\geq 0}^{n}$, define $\| a \| = \displaystyle \max_{1 \leq i \leq n} \{a_{i} \}$.
At this point, the following lemma holds.
\vskip\baselineskip

\begin{corollary} \label{cor:2.2}
Let $G = \displaystyle \bigoplus_{i=1}^{n} Z(p^{m_i}) \ \ ( 1 \leq m_{i} \leq m_{i+1})$.
If we define condition 1 as
\[
\text{condition 1}:\ ^{\exists}s \in \{ 1,2,\cdots, n\} \ s.t. \
\max \{0, m_{s} - \|\delta \| \} <
\underset{1 \leq l \leq n}{\min} \{ \max \{m_{l} - \delta_{l} ,m_{s} - \delta_{l} \} \},
\]
then
\[
Y(G) = \bigg\{ \delta = (\delta_{1},\cdots, \delta_{n}) \preceq (m_{1},\cdots,m_{n})
\Bigm\vert  \delta \text{ satisfies condition 1.} \bigg\}
\]
holds.
\end{corollary}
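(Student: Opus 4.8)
The plan is to obtain this as an immediate translation of the extendability criterion proved in \cref{prop:2.1}, rewritten in the coordinates supplied by $\Phi$. Recall that the proof of \cref{prop:2.1} shows, for a cyclic $H = \langle(h_{1},\cdots,h_{n})\rangle$ with $\alpha_{i}=v_{p}(h_{i})$ and order exponent $u=\max_{1\le i\le n}\{m_{i}-\alpha_{i}\}$, that $H\in X(G)$ precisely when the inequality \eqref{(2.2)} holds for some $s$; since this criterion is stated per component and depends only on the valuations $\alpha_{i}$, it descends to $X(G)/{\sim}$ and is exactly the information encoded by $\Phi$. So I would start by fixing $\delta = (\delta_{1},\cdots,\delta_{n})$ and setting $\alpha_{i}=m_{i}-\delta_{i}$, the inverse of $\Phi$ on the relevant class.

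First I would record the range constraint. Because $\alpha_{i}=v_{p}(h_{i})$ runs over $\{0,1,\cdots,m_{i}\}$ (using $v_{p}(0)=m_{i}$), the coordinate $\delta_{i}=m_{i}-\alpha_{i}$ runs over the same set, so every element of $Y(G)$ satisfies $\delta\preceq(m_{1},\cdots,m_{n})$; conversely each such $\delta$ is realized by the subgroup $\langle(p^{\alpha_{1}},\cdots,p^{\alpha_{n}})\rangle$. Next I would substitute into \eqref{(2.2)}. The order exponent becomes $u=\max_{i}\{m_{i}-\alpha_{i}\}=\max_{i}\{\delta_{i}\}=\|\delta\|$, so the left-hand side $\max\{0,m_{s}-u\}$ matches condition~1 verbatim, and for the right-hand side I would check, by splitting on the sign of $m_{s}-m_{l}$, the pointwise identity
\[
\alpha_{l}+\max\{0,m_{s}-m_{l}\}=(m_{l}-\delta_{l})+\max\{0,m_{s}-m_{l}\}=\max\{m_{l},m_{s}\}-\delta_{l}=\max\{m_{l}-\delta_{l},\,m_{s}-\delta_{l}\}.
\]
Minimizing over $l$ then turns the right-hand side of \eqref{(2.2)} into that of condition~1.

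With these substitutions, \eqref{(2.2)} becomes condition~1 word for word, so a subgroup corresponding to $\delta$ lies in $X(G)$ iff $\delta$ satisfies condition~1. Combining this equivalence with the bijectivity of $\Phi$ and the range constraint $\delta\preceq(m_{1},\cdots,m_{n})$ noted above gives the stated description of $Y(G)$. The computation itself is routine; the only point I would treat carefully is the passage from the one-directional wording of \cref{prop:2.1} to the genuine equivalence ``$H\in X(G)\iff\eqref{(2.2)}$ for some $s$'', and in particular its sufficiency half—here one exhibits a non-extendable $f$ by choosing the image valuation with $\beta_{s}=0$, which is admissible since $0\le\beta_{s}\le m_{s}-\max\{0,m_{s}-u\}$—so that condition~1 is not only necessary but sufficient for membership in $Y(G)$.
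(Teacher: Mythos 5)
Your proposal is correct and takes essentially the same route as the paper: the paper's own proof of \cref{cor:2.2} is the single line ``this follows from \Cref{(2.2)}'', and your argument is exactly that translation, fleshed out with the substitution $\alpha_{l}=m_{l}-\delta_{l}$, the identification $u=\|\delta\|$, the identity $\alpha_{l}+\max\{0,m_{s}-m_{l}\}=\max\{m_{l}-\delta_{l},m_{s}-\delta_{l}\}$, and the range constraint $\delta\preceq(m_{1},\cdots,m_{n})$. Your care about upgrading the one-directional wording of \cref{prop:2.1} to a genuine equivalence via the admissible choice $\beta_{s}=0$ is a detail the paper leaves implicit, but it is the same underlying argument.
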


\begin{proof}
This follows from \Cref{(2.2)}.
\end{proof}

Since it is difficult to consider $X(G)$ and $X(G) \slash {\sim}$ under this condition, we will transform it into a more convenient form in the following \cref{lemma:2.3}. and \cref{prop:2.4}.
\vskip\baselineskip

\begin{lemma} \label{lemma:2.3}
Let $G = \displaystyle \bigoplus_{i=1}^{n} Z(p^{m_{i}})^{\lambda_{i}} \ (1 \leq m_{i} < m_{i+1},\ \lambda_{i} \in \N)$. Set $\delta = \delta_{1} \oplus \cdots \oplus \delta_{n}$ for $\delta_{i} \preceq \underbrace{(m_{i}, \cdots, m_{i})}_{\lambda_{i}\text{times}}$ . If we define condition 2 as
\[
\text{condition 2}:\ ^{\exists}s \in \{ 1, 2, \cdots, n \} \ s.t. \
\max \{ 0, m_{s} - \|\delta \| \} <
\underset{1 \leq l \leq n}{\min} \{ \max \{ m_{l} - \| \delta_{l} \|, m_{s} - \| \delta_{l} \| \} \}.
\]
Then we have
\begin{align*}
Y(G) = \bigg\{\delta = \delta_{1} \oplus \cdots \oplus \delta_{n} \ \Bigm\vert \ 
\delta \text{ satisfies condition 2.}
\bigg\}.
\end{align*}

\end{lemma}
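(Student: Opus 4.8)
The plan is to reduce condition 2 to condition 1 of \Cref{cor:2.2} by viewing the factors of $G$ ungrouped, and then to observe that every quantity occurring in condition 1 sees a cyclic factor only through the common order of the block to which it belongs.

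First I would regard $G$ as $\bigoplus_{k=1}^{N} Z(p^{M_{k}})$, where $N = \sum_{i=1}^{n}\lambda_{i}$ and the non-decreasing list $M_{1} \leq \cdots \leq M_{N}$ repeats each exponent $m_{i}$ exactly $\lambda_{i}$ times. This is a decomposition of the type required by \Cref{cor:2.2} (with $M_{k} \leq M_{k+1}$), so $Y(G)$ consists precisely of the vectors $\delta = (\delta_{1},\cdots,\delta_{N}) \preceq (M_{1},\cdots,M_{N})$ for which there exists a factor index $s$ with
\[
\max \{0, M_{s} - \|\delta\|\} < \underset{1 \leq l \leq N}{\min}\{ \max\{M_{l} - \delta_{l}, M_{s} - \delta_{l}\}\}.
\]
Writing $\delta = \delta_{1} \oplus \cdots \oplus \delta_{n}$ in block form, where $\delta_{i} \in \Z_{\geq 0}^{\lambda_{i}}$ collects the entries attached to the factors of order $p^{m_{i}}$, reproduces exactly the data of the lemma.

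The key step is to simplify the right-hand side. Since subtracting $\delta_{l}$ commutes with the maximum, $\max\{M_{l}-\delta_{l}, M_{s}-\delta_{l}\} = \max\{M_{l}, M_{s}\} - \delta_{l}$. Grouping the factors $l$ by block and using $M_{l} = m_{j}$ for $l$ in block $j$, minimizing $\max\{m_{j}, M_{s}\} - \delta_{l}$ over the factors of block $j$ amounts to maximizing $\delta_{l}$, which returns $\max\{m_{j}, M_{s}\} - \|\delta_{j}\| = \max\{m_{j} - \|\delta_{j}\|, M_{s} - \|\delta_{j}\|\}$. Hence the minimum over all $N$ factors collapses to
\[
\underset{1 \leq l \leq N}{\min}\{ \max\{M_{l}-\delta_{l}, M_{s}-\delta_{l}\}\} = \underset{1 \leq j \leq n}{\min}\{\max\{m_{j}-\|\delta_{j}\|, M_{s}-\|\delta_{j}\|\}\},
\]
while the left-hand side is unchanged because $\|\delta\| = \max_{j}\|\delta_{j}\|$.

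Finally I would note that, after these rewrites, the inequality for a factor $s$ depends on $s$ only through $M_{s}$, that is, only through the block $j(s)$ containing $s$, and not through $\delta_{s}$ separately. Therefore the existential ``$\exists s \in \{1,\cdots,N\}$'' of condition 1 holds if and only if the block-level existential ``$\exists s \in \{1,\cdots,n\}$'' with $M_{s}$ read as $m_{s}$ holds, which is exactly condition 2: one direction takes the block of a witnessing factor, the other picks any factor in a witnessing block. This identifies $Y(G)$ with the set of $\delta$ satisfying condition 2. I expect the only delicate point to be the indexing bookkeeping in the passage from $N$ factors to $n$ blocks---verifying that the inner minimum collapses to $\|\delta_{j}\|$ uniformly in $s$ and that both the left-hand side and the quantifier factor through the block of $s$; the underlying content is merely the identity $\max\{a-c,b-c\} = \max\{a,b\}-c$ together with the fact that minimizing $-\delta_{l}$ over a block returns the block maximum.
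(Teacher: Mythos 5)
Your proposal is correct and follows essentially the same route as the paper: apply \Cref{cor:2.2} to the ungrouped decomposition, use $\max\{a-c,b-c\}=\max\{a,b\}-c$ to collapse the minimum within each block to the block maximum $\|\delta_j\|$, and observe that the condition depends on the witness $s$ only through $m_s$. In fact you are more careful than the paper on the last point---the paper silently writes the existential over blocks $s\in\{1,\ldots,n\}$ from the start, whereas you explicitly justify passing from ``$\exists s$ among all $N$ factors'' to ``$\exists s$ among the $n$ blocks.''
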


\begin{proof}
In the context of \cref{cor:2.2}, condition 1 can be written as follows.
\[
\ ^{\exists}s \in \{ 1,2,\cdots, n\} \ s.t. \
\max \{0, m_{s} - \|\delta \| \} < 
\min_{\substack{1 \leq l \leq n\\ 1 \leq j \leq \lambda_{l}}} \{ \max \{m_{l} - \delta_{l,j} ,m_{s} - \delta_{l,j} \} \}
\]
Since the right-hand side of the inequality becomes
\begin{align*}
\min_{\substack{1 \leq l \leq n\\ 1 \leq j \leq \lambda_{l}}} \{ \max \{m_{l} - \delta_{l,j} ,m_{s} - \delta_{l,j} \} \}
&= \min_{1 \leq l \leq n} \min_{ 1 \leq j \leq \lambda_{l}} \{ \max \{m_{l} - \delta_{l,j} ,m_{s} - \delta_{l,j} \} \}\\
&= \min_{1 \leq l \leq n} \{ \max \{m_{l} - \| \delta_{l}\| ,m_{s} - \| \delta_{l} \| \} \},
\end{align*} 
$Y(G)$ is the set of $\delta$ that satisfies condition 2.
\end{proof}

Furthermore, $Y(G)$ can be rewritten as follows.
\vskip\baselineskip

\begin{proposition} \label{prop:2.4}
Consider the same setup as in \cref{lemma:2.3}. Define
\begin{align*}
f(\delta) = f_{G}(\delta) =
\begin{cases}
1 & (\| \delta \| < m_{1}) \\
\max\{ s \mid m_{s} \leq \| \delta \| \} & (\| \delta \| \geq m_{1})
\end{cases}
\end{align*}
and set condition 3 as follows:
\begin{enumerate} 
\item If $\| \delta \| < m_{1}$, then $\| \delta_{1}\| < \|\delta \|$. 
\item If $m_{f(\delta)} = \| \delta \|$, then $\| \delta_{f(\delta)}\| < m_{f(\delta)}$. 
\item If $m_{f(\delta)} < \| \delta \|$, then either $\| \delta_{f(\delta)}\| < m_{f(\delta)}$ or $\| \delta_{f(\delta)+1}\| < \| \delta \|$. 
\end{enumerate}
Then, we have
\begin{align*}
Y(G) = \bigg\{\delta = \delta_{1} \oplus \cdots \oplus \delta_{n} \Bigm\vert \delta \text{ satisfies condition 3.} \bigg\}.
\end{align*}
\end{proposition}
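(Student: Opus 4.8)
The plan is to show, for every admissible $\delta = \delta_{1} \oplus \cdots \oplus \delta_{n}$, that condition 2 of \cref{lemma:2.3} is equivalent to condition 3; since \cref{lemma:2.3} identifies $Y(G)$ with the set of $\delta$ satisfying condition 2, this gives the proposition. First I would recast condition 2 so that the contribution of each block is isolated. Writing $k = \|\delta\|$ and using the identity $\max\{m_{l} - \|\delta_{l}\|,\, m_{s} - \|\delta_{l}\|\} = \max\{m_{l}, m_{s}\} - \|\delta_{l}\|$, condition 2 becomes: there exists $s$ such that
\[
\|\delta_{l}\| < c_{s}(l) := \max\{m_{l}, m_{s}\} - \max\{0,\, m_{s} - k\} \qquad \text{for every } l .
\]
Two elementary facts are used throughout: $\|\delta_{l}\| \le \|\delta\| = k$ for all $l$, and $m_{1} < \cdots < m_{n}$. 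Together they force all but one of the $n$ constraints $\|\delta_{l}\| < c_{s}(l)$ to hold automatically once $s$ is chosen appropriately, which is exactly what reduces the block condition to a statement about a single $\delta_{l}$.

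For the sufficiency direction I would split into the three regimes defining $f(\delta)$, writing $s_{0} = f(\delta)$. If $\|\delta\| < m_{1}$, then $s_{0} = 1$ and $c_{1}(l) = m_{l} - m_{1} + k$, so the constraint is vacuous for $l > 1$ and reduces to $\|\delta_{1}\| < k$ at $l = 1$; hence $s = 1$ witnesses condition 2 exactly when clause (1) holds. If $m_{s_{0}} = \|\delta\|$, choosing $s = s_{0}$ gives $c_{s_{0}}(l) = \max\{m_{l}, m_{s_{0}}\}$, and the constraints for $l \ne s_{0}$ are automatic (for $l > s_{0}$ since $\|\delta_{l}\| \le k = m_{s_{0}} < m_{l}$, and for $l < s_{0}$ since $\|\delta_{l}\| \le m_{l} < m_{s_{0}}$), leaving $\|\delta_{s_{0}}\| < m_{s_{0}}$, which is clause (2). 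If $m_{s_{0}} < \|\delta\|$, one first notes $s_{0} < n$ (as $\|\delta\| \le m_{n}$ forces $s_{0} \ne n$), so $m_{s_{0}} < k < m_{s_{0}+1}$; then $s = s_{0}$ collapses to $\|\delta_{s_{0}}\| < m_{s_{0}}$ and $s = s_{0}+1$ collapses to $\|\delta_{s_{0}+1}\| < k$, matching the two disjuncts of clause (3).

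For the converse I would argue by contrapositive: assuming the relevant clause fails, I exhibit for every $s$ an index $l$ violating $\|\delta_{l}\| < c_{s}(l)$. The uniform pattern is that $l = s_{0}$ defeats all $s \le s_{0}$ and $l = s_{0}+1$ defeats all $s > s_{0}$. Concretely, when $\|\delta_{s_{0}}\|$ is maximal one computes $c_{s}(s_{0}) = m_{s_{0}}$ for $s \le s_{0}$, and when $\|\delta_{s_{0}+1}\| = k$ one computes $c_{s}(s_{0}+1) = k$ for every $s > s_{0}$; in the first two regimes the single index $l = s_{0}$ (respectively $l = 1$) already defeats every $s$, since there $c_{s}(s_{0}) = k$ for all $s$.

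I expect the main obstacle to be precisely this necessity bookkeeping: keeping the $\max\{0, m_{s} - k\}$ term and the boundary indices $s_{0} \in \{1, n\}$ straight across all sub-ranges of $s$, and confirming that the two indices $s_{0}$ and $s_{0}+1$ genuinely cover every choice of $s$. Once the reformulation through $c_{s}(l)$ is in place, each individual verification is a short monotonicity computation driven by $\|\delta_{l}\| \le k$ and the strict increase of the $m_{i}$.
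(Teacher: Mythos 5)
Your proposal is correct and follows essentially the same route as the paper's proof: rewrite condition 2 of \cref{lemma:2.3} so each block $\delta_l$ is constrained separately, split into the three regimes determined by $f(\delta)$, produce the explicit witnesses $s=1$, $s=f(\delta)$, $s=f(\delta)+1$ for sufficiency, and for necessity show that when the relevant clause of condition 3 fails the indices $l=f(\delta)$ and $l=f(\delta)+1$ defeat every choice of $s$. One point in your favor: in the regime $m_{f(\delta)} < \|\delta\|$ you correctly let the witness depend on which disjunct of clause (3) holds ($s=f(\delta)$ when $\|\delta_{f(\delta)}\| < m_{f(\delta)}$, and $s=f(\delta)+1$ when $\|\delta_{f(\delta)+1}\| < \|\delta\|$), whereas the paper's case (C$'$) asserts that $s=f(\delta)+1$ alone always works --- an assertion that fails when only the first disjunct holds, so your version is actually the more careful one.
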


\begin{proof}
Let $Y'(G) = \bigg\{\delta = \delta_{1} \oplus \cdots \oplus \delta_{n} \Bigm\vert \delta \text{ satisfies condition 3.} \bigg\}$. We will prove that $Y(G) = Y'(G)$. First, we will show that $Y(G) \subset Y'(G)$. Assume that $\delta \in Y(G)$. Then,
\[
\exists s \in \{ 1,2,\cdots,n \} \ \text{s.t} \ \forall l \in \{ 1,2,\cdots,n \}, \ \max\{0, m_{s} - \|\delta \| \}<
\max \{m_{l} - \| \delta_{l}\| ,m_{s} - \| \delta_{l} \| \}.
\]\\
(A) When $\| \delta \| < m_{1}$, we will show that $\| \delta_{1}\| < \|\delta \|$.\\
Assuming $\| \delta_{1}\| = \|\delta \|$ leads to a contradiction. For $l=1$, we have
\[
m_{s} - \| \delta \| < \text{max} \{m_{1} - \| \delta_{1}\| ,m_{s} - \| \delta_{1} \| \} = m_{s} - \| \delta \|,
\]
which contradicts $\delta \in Y(G)$. Therefore, if $\delta \in Y(G)$ and $\| \delta \| < m_{1}$, then $\delta \in Y'(G)$.\\
\\
(B) When $m_{f(\delta)} = \| \delta \|$, we will show that $\| \delta_{f(\delta)}\| < m_{f(\delta)}$.\\
Assuming $\| \delta_{f(\delta)}\| = m_{f(\delta)}$ leads to a contradiction. For $l=f(\delta)$, we have
\[
\max\{0, m_{s} - \|\delta \| \}<
\max \{m_{f(\delta)} - \| \delta_{f(\delta)}\| ,m_{s} - \| \delta_{f(\delta)} \| \} = \max\{0, m_{s} - \|\delta \| \},
\]
which contradicts $\delta \in Y(G)$. Therefore, if $\delta \in Y(G)$ and $m_{f(\delta)} = \| \delta \|$, then $\delta \in Y'(G)$.\\
\\
(C) When $m_{f(\delta)} < \| \delta \|$, we will show that either $\| \delta_{f(\delta)}\| < m_{f(\delta)}$ or $\| \delta_{f(\delta)+1}\| < \| \delta \|$.\\
Assuming both $\| \delta_{f(\delta)}\| = m_{f(\delta)}$ and $\| \delta_{f(\delta)+1}\| = \| \delta \|$ leads to a contradiction. When $s \leq f(\delta)$, for $l = f(\delta)$, we have
\[
0 = \max\{0, m_{s} - \|\delta \| \}<
\max \{m_{f(\delta)} - \| \delta_{f(\delta)}\| ,m_{s} - \| \delta_{f(\delta)} \| \} = 0,
\]
which is a contradiction. When $s \geq f(\delta) +1$, for $l = f(\delta) +1$, we have
\[
m_{s} - \|\delta \| <
\max \{m_{f(\delta)+1} - \| \delta_{f(\delta)+1}\| ,m_{s} - \| \delta_{f(\delta)+1} \| \} = m_{s} - \| \delta \|,
\]
which also contradicts $\delta \in Y(G)$. Therefore, if $\delta \in Y(G)$, and either $\| \delta_{f(\delta)}\| < m_{f(\delta)}$ or $\| \delta_{f(\delta)+1}\| < \| \delta \|$, then $\delta \in Y'(G)$.

Thus, from (A), (B), and (C), we have $Y(G) \subset Y'(G)$.\\

Next, we will show that $Y(G) \supset Y(G)'$. Assume that $\delta \in Y'(G)$.\\
\\
(A') When $\| \delta \| < m_{1}$, we will show that $s = 1$ can be chosen. Since $\delta \in Y'(G)$, we have $\| \delta_{1} \| < \| \delta \|$. Therefore,
\begin{align*}
m_{1} - \| \delta \| &< m_{1} - \| \delta_{1} \|,\\
m_{1} - \| \delta \| &< m_{l} - \|\delta \| \leq  m_{l} - \|\delta_{l} \| \quad (l \in \{ 1,2,\cdots,n \}),
\end{align*}
which implies that $\delta \in Y(G)$.\\
\\
(B') When $m_{f(\delta)} = \| \delta \|$, we will show that $s = f(\delta)$ can be chosen. Since $\delta \in Y'(G)$, we have $\| \delta_{f(\delta)}\| < \| \delta \|$. Also,
\begin{align*}
\max \{m_{l} - \| \delta_{l}\| ,m_{f(\delta)} - \| \delta_{l} \| \} = 
\begin{cases}
m_{f(\delta)} - \| \delta_{l}\| &\ (l < f(\delta)), \\
m_{l} - \| \delta_{l}\| &\ (l \geq f(\delta)),
\end{cases}
\end{align*}
and for $l < f(\delta)$,
\[
\| \delta_{l} \| \leq m_{l} < \|\delta \|.
\]
Therefore,
\begin{align*}
m_{f(\delta)} - \| \delta \| &< m_{l} - \| \delta \| \leq m_{l} - \| \delta_{l}\| \quad (l > f(\delta)),\\
m_{f(\delta)} - \| \delta \| &< m_{f(\delta)} - \| \delta_{l}\| \quad (l \leq f(\delta))
\end{align*}
holds. Therefore, $\delta \in Y(G)$.\\
\\
(C') When $m_{f(\delta)} < \| \delta \|$, we will show that $s = f(\delta)+1$ can be chosen. Since $\delta \in Y'(G)$, we have either $\| \delta_{f(\delta)}\| < m_{f(\delta)}$ or $\| \delta_{f(\delta)+1}\| < \| \delta \|$. Also,
\begin{align*}
\max \{m_{l} - \| \delta_{l}\| ,m_{f(\delta)+1} - \| \delta_{l} \| \} = 
\begin{cases}
m_{f(\delta)+1} - \| \delta_{l}\| &\ (l < f(\delta)+1), \\
m_{l} - \| \delta_{l}\| &\ (l \geq f(\delta)+1),
\end{cases}
\end{align*}
and for $l \leq f(\delta)$,
\[
\|\delta_{l} \| \leq m_{l} < \|\delta \|.
\]
Thus,
\begin{align*}
m_{f(\delta)+1} - \| \delta \| &< m_{l} - \| \delta \| \leq m_{l} - \| \delta_{l}\| \quad (l > f(\delta)+1),\\
m_{f(\delta)+1} - \| \delta \| &< m_{f(\delta)+1} - \| \delta_{l}\| \quad (l \leq f(\delta)+1)
\end{align*}
holds. Furthermore, since $m_{f(\delta)+1} - \| \delta \| > 0$, we conclude that $\delta \in Y(G)$.
Thus, from (A'), (B'), and (C'), we have $Y(G) \supset Y'(G)$.
\end{proof}

\begin{lemma} \label{lemma:2.5}
Let $G = \displaystyle \bigoplus_{i=1}^{n} Z(p^{m_i}) \ \ ( 1 \leq m_{i} \leq m_{i+1})$. Let $\Proj: X(G) \rightarrow X(G)\slash {\sim}$ be the natural surjection. For $[\langle (p^{\alpha_{1}},\cdots,p^{\alpha_{n}}) \rangle] \in X(G)\slash {\sim}$, we have
\[
\# \Proj^{-1}([\langle (p^{\alpha_{1}},\cdots,p^{\alpha_{n}}) \rangle]) = \bigg( \displaystyle \prod_{i=1}^{n} \varphi (p^{\delta_{i}}) \bigg)\slash \varphi(p^{\| \delta \|}),
\]
where $\varphi$ is Euler's totient function. For $\delta \in Y(G)$, define $O_{p}(\delta)$ as
\[
O_{p}(\delta) = \# \Proj^{-1}(\Phi^{-1}(\delta)).
\]
\end{lemma}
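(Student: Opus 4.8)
The plan is to reduce the statement to a direct count of generators. Write $\alpha_{i} = m_{i} - \delta_{i}$, so that $\Phi^{-1}(\delta) = [\langle (p^{\alpha_{1}},\cdots,p^{\alpha_{n}}) \rangle]$ and $\Proj^{-1}(\Phi^{-1}(\delta))$ is the set of cyclic subgroups $H' \subseteq G$ possessing a generator $\boldsymbol{h} = (h_{1},\cdots,h_{n})$ with $v_{p}(h_{i}) = \alpha_{i}$ for every $i$; call such a tuple of valuations the \emph{profile} of $\boldsymbol{h}$. (That every cyclic subgroup with this profile actually lies in $X(G)$, rather than merely being ${\sim}$-equivalent to one, is exactly \cref{prop:2.1}.) First I would introduce the set $S_{\alpha} = \{ \boldsymbol{h} \in G \mid v_{p}(h_{i}) = \alpha_{i} \text{ for all } i \}$ of elements of profile $\alpha$ and study the surjection $\boldsymbol{h} \mapsto \langle \boldsymbol{h} \rangle$ from $S_{\alpha}$ onto $\Proj^{-1}(\Phi^{-1}(\delta))$.

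Then I would compute the two quantities through which the count factors. The size of $S_{\alpha}$ is obtained componentwise: in $Z(p^{m_{i}})$ the elements of valuation exactly $\alpha_{i}$ are $p^{\alpha_{i}} m$ with $p \nmid m$ and $0 \leq m < p^{\delta_{i}}$, of which there are $\varphi(p^{\delta_{i}})$ (and when $\delta_{i} = 0$ this correctly gives the single element $h_{i} = 0$, using $\varphi(1) = 1$ together with the convention $v_{p}(0) = m_{i}$). Hence $\# S_{\alpha} = \prod_{i=1}^{n} \varphi(p^{\delta_{i}})$. Next, each $\boldsymbol{h} \in S_{\alpha}$ has order equal to the least common multiple of the orders $p^{m_{i} - \alpha_{i}} = p^{\delta_{i}}$ of its components, i.e. $p^{\| \delta \|}$, in agreement with the value $u = \max_{i}\{ m_{i} - \alpha_{i} \}$ computed in \cref{prop:2.1}; thus every subgroup in the image is cyclic of order $p^{\| \delta \|}$ and therefore has exactly $\varphi(p^{\| \delta \|})$ generators.

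The crux is to show that the fibers of $\boldsymbol{h} \mapsto \langle \boldsymbol{h} \rangle$ over $S_{\alpha}$ are precisely the full generator sets of the subgroups in the image, so that $S_{\alpha}$ is partitioned into blocks of uniform size $\varphi(p^{\| \delta \|})$. For this I would observe that any two generators of a cyclic group of order $p^{\| \delta \|}$ differ by a scalar $c$ with $p \nmid c$, and that multiplication by such a unit leaves each valuation unchanged: $v_{p}(c h_{i}) = v_{p}(h_{i})$. Consequently all $\varphi(p^{\| \delta \|})$ generators of a given $H' \in \Proj^{-1}(\Phi^{-1}(\delta))$ share the profile $\alpha$ and hence lie in $S_{\alpha}$, while generators of distinct subgroups are disjoint; this unit-invariance is also exactly what makes ${\sim}$ well-defined on subgroups, the fact deferred from the introduction. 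I expect this invariance step, together with the bookkeeping of the degenerate coordinates $\delta_{i} = 0$, to be the only genuinely delicate point, the rest being routine.

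Finally, counting $S_{\alpha}$ in two ways gives $\prod_{i=1}^{n} \varphi(p^{\delta_{i}}) = \# S_{\alpha} = \# \Proj^{-1}(\Phi^{-1}(\delta)) \cdot \varphi(p^{\| \delta \|})$, and dividing yields the asserted formula $O_{p}(\delta) = \big( \prod_{i=1}^{n} \varphi(p^{\delta_{i}}) \big) \slash \varphi(p^{\| \delta \|})$; integrality is automatic, since a coordinate achieving the maximum $\| \delta \|$ contributes a factor $\varphi(p^{\| \delta \|})$ to the numerator.
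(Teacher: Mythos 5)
Your proposal is correct, but it organizes the count differently from the paper. The paper's proof fixes an index $l$ with $\delta_{l} = \|\delta\|$ and builds a canonical generator for each subgroup in the fiber: it shows (via an Euler's-theorem manipulation) that every subgroup in $\Proj^{-1}(\Phi^{-1}(\delta))$ has a generator whose $l$-th component is exactly $p^{\alpha_{l}}$, shows that distinct normalized tuples $(h_{1},\dots,h_{n})$ with $p \nmid h_{i}$, $1 \leq h_{i} < p^{\delta_{i}}$, $h_{l}=1$ generate distinct subgroups, and then enumerates the free coordinates, which yields $\prod_{i \neq l}\varphi(p^{\delta_{i}})$, i.e.\ the stated quotient. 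You instead count the full set $S_{\alpha}$ of profile-$\alpha$ elements, $\#S_{\alpha} = \prod_{i}\varphi(p^{\delta_{i}})$, and divide by the uniform fiber size of $\boldsymbol{h} \mapsto \langle \boldsymbol{h} \rangle$, having identified each fiber with the complete generator set (of size $\varphi(p^{\|\delta\|})$) of a cyclic group of order $p^{\|\delta\|}$. Both arguments hinge on the same fact --- multiplication by a unit $c$ with $p \nmid c$ preserves every componentwise valuation, which is also what makes ${\sim}$ well defined --- but your orbit-style double count avoids the paper's normalization step entirely and makes the division in the formula conceptually transparent (numerator $=$ profile elements, denominator $=$ generators per subgroup), whereas the paper's normalization buys an explicit system of canonical representatives for the fiber. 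Your handling of the degenerate coordinates $\delta_{i}=0$ and of membership in $X(G)$ via \cref{prop:2.1} is also correct, so the argument is complete as sketched.
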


\begin{proof}
Let $\delta_{i} = m_{i} - \alpha_{i}$ and let $l \in \underset{1\leq i \leq n }{\text{argmax}}\{ \delta_{i} \}$ be fixed. We will show that
\begin{align}
\Proj^{-1}([\langle (p^{\alpha_{1}},\cdots,p^{\alpha_{n}}) \rangle]) 
= \big\{ \langle (h_{1}p^{\alpha_{1}}, \cdots, h_{n}p^{\alpha_{n}}) \rangle \mid \ p \nmid h_{i},\ 1 \leq h_{i} < p^{\delta_{i}},\ h_{l} = 1 \big\}
\end{align}
holds. First, under the conditions $p \nmid h_{i},h_{i}',\ 1 \leq h_{i},h_{i}' < p^{\delta_{i}},\ h_{l} = h_{l}' = 1$, we will show that
\begin{align}
(h_{1},\cdots,h_{n}) \neq (h_{1}',\cdots,h_{n}') \Rightarrow \langle (h_{1}p^{\alpha_{1}}, \cdots, h_{n}p^{\alpha_{n}}) \rangle \neq \langle (h_{1}'p^{\alpha_{1}}, \cdots, h_{n}'p^{\alpha_{n}}) \rangle.
\end{align}
Since $(h_{1},\cdots,h_{n}) \neq (h_{1}',\cdots,h_{n}')$ and $h_{l} = h_{l}' = 1,\ l \in \underset{1\leq i \leq n }{\text{argmax}}\{ \delta_{i} \}$, for $1 \leq u \leq p^{\| \delta\|}$, we have
\[
u(h_{1}p^{\alpha_{1}}, \cdots, h_{n}p^{\alpha_{n}}) = (uh_{1}p^{\alpha_{1}}, \cdots, \underbrace{up^{\alpha_{l}}}_{l\text{-th}},\cdots, uh_{n}p^{\alpha_{n}}) \neq
(h_{1}'p^{\alpha_{1}}, \cdots, \underbrace{p^{\alpha_{l}}}_{l\text{-th}},\cdots, h_{n}'p^{\alpha_{n}})
\]
so (4) is shown.\\
\\
Next, we show that these elements exhaust $\Proj^{-1}([\langle (p^{\alpha_{1}},\cdots,p^{\alpha_{n}}) \rangle])$. For $p \nmid h_{i},\ l \in \underset{1\leq i \leq n }{\text{argmax}}\{ \delta_{i} \},\ 1 < h_{l} < p^{\|\delta\|}$, there exists an $h_{i}'$ such that
\[
\langle (h_{1}p^{\alpha_{1}}, \cdots, \underbrace{h_{l}p^{\alpha_{l}}}_{l\text{-th}}, \cdots, h_{n}p^{\alpha_{n}}) \rangle 
= \langle (h_{1}'p^{\alpha_{1}}, \cdots, \underbrace{p^{\alpha_{l}}}_{l\text{-th}}, \cdots, h_{n}'p^{\alpha_{n}}) \rangle.
\]
Since $p \nmid h_{l}$, by Euler's theorem, we have
\[
h_{l}^{\varphi(p^{m_{l}})} \equiv 1 \mod p^{m_{l}},
\]
so by setting $h_{i}' \equiv h_{i}^{\varphi(p^{m_{l}})} \mod p^{m_{i}}$, we can make the $l$-th component a power of $p$. Thus, (3) is shown.\\
\\
For each $i$, there are $\varphi(p^{\delta_{i}})$ choices for $h_{i}$, so we have
\[
\# \Proj^{-1}([\langle (p^{\alpha_{1}},\cdots,p^{\alpha_{n}}) \rangle]) = \bigg( \displaystyle \prod_{i=1}^{n} \varphi (p^{\delta_{i}}) \bigg)\slash \varphi(p^{\| \delta \|}).
\]
\end{proof}

\noindent\textbf{Proof of \cref{thm:1.3}} 
\begin{proof}
Let \( Y_{1}(G), Y_{2}(G), Y_{3}(G), Y_{4}(G) \) be defined as follows:
\begin{align*}
Y_{1}(G) &= \big\{ \delta \in Y(G) \mid 
1 \leq \| \delta \| < m_{1},\
\| \delta_{1} \| < \| \delta \| 
\big\} \\
Y_{2}(G) &= \bigsqcup_{i=1}^{n-1}
\big\{ \delta \in Y(G) \mid
m_{i} \leq \| \delta \| < m_{i+1},\
\| \delta_{i} \| < m_{i}
\big\} \\
Y_{3}(G) &= \bigsqcup_{i=1}^{n-1}
\big\{ \delta \in Y(G) \mid 
m_{i} < \| \delta \| < m_{i+1},\
\| \delta_{i+1} \| < \| \delta \|
\big\} \\
Y_{4}(G) &= Y_{2}(G) \cap Y_{3}(G) \\
&= \bigsqcup_{i=1}^{n-1}
\big\{ \delta \in Y(G) \mid 
m_{i} < \| \delta \| < m_{i+1},\
\| \delta_{i} \| < m_{i},\
\| \delta_{i+1} \| < \| \delta \|
\big\} 
\end{align*}
Consequently, by condition 3, we obtain
\[
Y(G) = Y_{1}(G) \sqcup (Y_{2}(G) \cup Y_{3}(G)).
\]
Thus, it follows that
\begin{align*}
\# X(G) \slash {\sim} &= \# Y(G) = \# Y_{1}(G) + \# Y_{2}(G) + \# Y_{3}(G) - \# Y_{4}(G) \\
\# X(G) &= \sum_{\delta \in Y(G)} O_{p}(\delta) = \sum_{\delta \in Y_{1}(G)} O_{p}(\delta) + \sum_{\delta \in Y_{2}(G)} O_{p}(\delta) + \sum_{\delta \in Y_{3}(G)} O_{p}(\delta) - \sum_{\delta \in Y_{4}(G)} O_{p}(\delta).
\end{align*}
(1) Let us consider \( Y_{1}(G) \).
\begin{align*}
\# Y_{1}(G) =
\begin{cases}
\displaystyle \sum_{1 \leq k < m_{1}} \sum_{\substack{\delta \in Y_{1}(G)\\ k = \|\delta \|}} 1 &\ (m_{1} > 1)\\
0 &\ (m_{1} = 1)
\end{cases}
\end{align*}
Furthermore, it holds that
\begin{align*}
\sum_{\delta \in Y_{1}(G)} O_{p}(\delta)
&= 
\begin{cases}
\displaystyle \sum_{1 \leq k < m_{1}} \sum_{\substack{\delta \in Y_{1}(G)\\ k = \|\delta \|}} O_{p}(\delta) &\ (m_{1} > 1)\\
0 &\ (m_{1} = 1)
\end{cases}
\end{align*}
When \( 1 \leq k < m_{1} \),
\begin{align*}
\sum_{\substack{\delta \in Y_{1}(G)\\ k = \|\delta \|}} 1
&= \sum_{\| \delta_{1} \| < k} \sum_{\substack{i = 2,\cdots ,n\\ \|\delta_{i}\| \leq k\\ ^{\exists} i \text{ such that } \|\delta_{i}\| = k}} 1 \\
&= k^{\lambda_{1}} \cdot \bigg( (k+1)^{|\lambda|_{n} - \lambda_{1}} - k^{|\lambda|_{n} - \lambda_{1}} \bigg)
\end{align*}

\begin{align*}
\sum_{\substack{\delta \in Y_{1}(G)\\ k = \|\delta \|}} O_{p}(\delta)
&= \sum_{\| \delta_{1} \| < k} \sum_{\substack{i = 2,\cdots ,n\\ \|\delta_{i}\| \leq k\\ ^{\exists} i \text{ such that } \|\delta_{i}\| = k}} 
\displaystyle \bigg( \prod_{\substack{1 \leq i \leq n\\ 1 \leq s_{i} \leq \lambda_{i}}} \varphi(p^{\delta_{i,s_{i}}}) \bigg) \slash \varphi(p^{k})\\
&= \bigg( \sum_{j=0}^{k-1} \varphi(p^{j})\bigg)^{\lambda_{1}} \cdot \bigg(\bigg(\sum_{j=0}^{k} \varphi(p^{j}) \bigg)^{|\lambda|_{n} - \lambda_{1}} - \bigg( \sum_{j=0}^{k-1} \varphi(p^{j}) \bigg)^{|\lambda|_{n} - \lambda_{1}} \bigg) \slash \varphi(p^{k})\\
&= p^{(k-1)\lambda_{1}} \cdot \frac{p^{k(|\lambda|_{n} - \lambda_{1})} - p^{(k-1)(|\lambda|_{n} - \lambda_{1})}}{p^{k-1}(p-1)}
\end{align*}
(2) Let us consider \( Y_{2}(G) \).
\begin{align*}
\# Y_{2}(G) =
\sum_{i=1}^{n-1} \sum_{m_{i} \leq k < m_{i+1}} \sum_{\substack{\delta \in Y_{2}(G)\\ k = \|\delta \|}} 1
\end{align*}
Furthermore, it holds that
\begin{align*}
\sum_{\delta \in Y_{2}(G)} O_{p}(\delta)
&= \sum_{i=1}^{n-1} \sum_{m_{i} \leq k < m_{i+1}} 
\sum_{\substack{\delta \in Y_{2}(G)\\ k = \|\delta \|}} O_{p}(\delta)
\end{align*}
When \( m_{i} \leq k < m_{i+1} \),
\begin{align*}
\sum_{\substack{\delta \in Y_{2}(G)\\ k = \|\delta \|}} 1 &=
\sum_{\substack{j=1,\cdots,i-1 \\ \|\delta_{j} \| \leq m_{j}}}
\sum_{\|\delta_{i} \| < m_{i}}
\sum_{\substack{l=i+1,\cdots,n \\ \|\delta_{l} \| \leq k \\ ^{\exists} l \text{ such that } \|\delta_{l} \| = k}} 1\\
&= m_{i}^{\lambda_{i}} 
\bigg( \prod_{j=1}^{i-1} (m_{j}+1)^{\lambda_{j}} \bigg)
\bigg( (k+1)^{|\lambda|_{n} - |\lambda|_{i}} - k^{|\lambda|_{n} - |\lambda|_{i}} \bigg)
\end{align*}

\begin{align*}
\sum_{\substack{\delta \in Y_{2}(G)\\ k = \|\delta \|}} O_{p}(\delta)
&= \sum_{\substack{j=1,\cdots,i-1 \\ \|\delta_{j} \| \leq m_{j}}}
\sum_{\|\delta_{i} \| < m_{i}}
\sum_{\substack{l=i+1,\cdots,n \\ \|\delta_{l} \| \leq k \\ ^{\exists} l \text{ such that } \|\delta_{l} \| = k}}
\bigg( \prod_{\substack{1 \leq i \leq n\\ 1 \leq s_{i} \leq \lambda_{i}}} \varphi(p^{\delta_{i,s_{i}}}) \bigg) \slash \varphi(p^{k})\\
&=
\bigg( \prod_{j=1}^{i-1} p^{m_{j}\lambda_{j}} \bigg) p^{(m_{i}-1)\lambda_{i}} \cdot
\frac{p^{k(|\lambda|_{n} - |\lambda|_{i})} - p^{(k-1)(|\lambda|_{n} - |\lambda|_{i})}}{p^{k-1}(p-1)}
\end{align*}
(3) Let us consider \( Y_{3}(G) \).
\begin{align*}
\# Y_{3}(G) =
\sum_{i=1}^{n-1} \sum_{m_{i} < k < m_{i+1}} \sum_{\substack{\delta \in Y_{3}(G)\\ k = \|\delta \|}} 1
\end{align*}
Furthermore, it holds that
\begin{align*}
\sum_{\delta \in Y_{3}(G)} O_{p}(\delta)
&= \sum_{i=1}^{n-1} \sum_{m_{i} < k < m_{i+1}} 
\sum_{\substack{\delta \in Y_{3}(G)\\ k = \|\delta \|}} O_{p}(\delta)
\end{align*}
When \( m_{i} < k < m_{i+1} \),
\begin{align*}
\sum_{\substack{\delta \in Y_{3}(G)\\ k = \|\delta \|}} 1
&= \sum_{\substack{j=1,\cdots,i \\ \|\delta_{j} \| \leq m_{j}}}
\sum_{\|\delta_{i+1} \| < k}
\sum_{\substack{l=i+2,\cdots,n \\ \|\delta_{l} \| \leq k \\ ^{\exists} l \text{ such that } \|\delta_{l} \| = k}} 1\\
&= k^{\lambda_{i+1}} 
\bigg( \prod_{j=1}^{i} (m_{j}+1)^{\lambda_{j}} \bigg)
\bigg( (k+1)^{|\lambda|_{n} - |\lambda|_{i+1}} - k^{|\lambda|_{n} - |\lambda|_{i+1}} \bigg)
\end{align*}

\begin{align*}
\sum_{\substack{\delta \in Y_{3}(G)\\ k = \|\delta \|}} O_{p}(\delta)
&= \sum_{\substack{j=1,\cdots,i \\ \|\delta_{j} \| \leq m_{j}}}
\sum_{\|\delta_{i+1} \| < k}
\sum_{\substack{l=i+2,\cdots,n \\ \|\delta_{l} \| \leq k \\ ^{\exists} l \text{ such that } \|\delta_{l} \| = k}} 
\bigg( \prod_{\substack{1 \leq i \leq n\\ 1 \leq s_{i} \leq \lambda_{i}}} \varphi(p^{\delta_{i,s_{i}}}) \bigg) \slash \varphi(p^{k})\\
&= \bigg( \prod_{j=1}^{i} p^{m_{j}\lambda_{j}} \bigg) 
p^{(k-1)\lambda_{i+1}} \cdot
\frac{p^{k(|\lambda|_{n} - |\lambda|_{i+1})} - p^{(k-1)(|\lambda|_{n} - |\lambda|_{i+1})}}{p^{k-1}(p-1)}
\end{align*}
(4) Let us consider \( Y_{4}(G) \).
\begin{align*}
\# Y_{4}(G) =
\sum_{i=1}^{n-1} \sum_{m_{i} < k < m_{i+1}} \sum_{\substack{\delta \in Y_{4}(G)\\ k = \|\delta \|}} 1
\end{align*}
Furthermore, it holds that
\begin{align*}
\sum_{\delta \in Y_{4}(G)} O_{p}(\delta)
&= \sum_{i=1}^{n-1} \sum_{m_{i} < k < m_{i+1}} 
\sum_{\substack{\delta \in Y_{4}(G)\\ k = \|\delta \|}} O_{p}(\delta)
\end{align*}
When \( m_{i} < k < m_{i+1} \),
\begin{align*}
\sum_{\substack{\delta \in Y_{4}(G)\\ k = \|\delta \|}} 1
&= \sum_{\substack{j=1,\cdots,i-1 \\ \|\delta_{j} \| \leq m_{j}}}
\sum_{\|\delta_{i} \| < m_{i}}
\sum_{\|\delta_{i+1} \| < k}
\sum_{\substack{l=i+2,\cdots,n \\ \|\delta_{l} \| \leq k \\ ^{\exists} l \text{ such that } \|\delta_{l} \| = k}} 1\\
&= k^{\lambda_{i+1}} m_{i}^{\lambda_{i}}
\bigg( \prod_{j=1}^{i-1} (m_{j}+1)^{\lambda_{j}} \bigg)
\bigg( (k+1)^{|\lambda|_{n} - |\lambda|_{i+1}} - k^{|\lambda|_{n} - |\lambda|_{i+1}} \bigg)
\end{align*}

\begin{align*}
\sum_{\substack{\delta \in Y_{4}(G)\\ k = \|\delta \|}} O_{p}(\delta)
&= \sum_{\substack{j=1,\cdots,i \\ \|\delta_{j} \| \leq m_{j}}}
\sum_{\|\delta_{i} \| < m_{i}}
\sum_{\|\delta_{i+1} \| < k}
\sum_{\substack{l=i+2,\cdots,n \\ \|\delta_{l} \| \leq k \\ ^{\exists} l \text{ such that } \|\delta_{l} \| = k}}
\bigg( \prod_{\substack{1 \leq i \leq n\\ 1 \leq s_{i} \leq \lambda_{i}}} \varphi(p^{\delta_{i,s_{i}}}) \bigg) \slash \varphi(p^{k})\\
&= \bigg( \prod_{j=1}^{i-1} p^{m_{j}\lambda_{j}} \bigg) 
p^{(m_{i}-1)\lambda_{i}}p^{(k-1)\lambda_{i+1}} \cdot
\frac{p^{k(|\lambda|_{n} - |\lambda|_{i+1})} - p^{(k-1)(|\lambda|_{n} - |\lambda|_{i+1})}}{p^{k-1}(p-1)}
\end{align*}
Thus, \cref{thm:1.3} has been proven.
\end{proof}

\section{Proof of Theorem 1.1}

In this section, we prove \cref{thm:1.1}.  As part of this preparation, consider the group $G = \bigoplus_{i=1}^{n} Z(m_{i}) $ with $ m_{i} \geq 1 $. By the Chinese remainder theorem, we have
\[
Z(m_{i}) = \bigoplus_{p \mid m_{i}} Z(p^{v_{p}(m_{i})}).
\]
We define $ G_{p} $ as
\[
G_{p} = \bigoplus_{i=1}^{n} Z(p^{v_{p}(m_{i})}).
\]
Then, we obtain the group isomorphism
\[
G \cong \bigoplus_{p:\text{prime}} G_{p}.
\]
Now, we will examine how $\# X(G)$ behaves for such groups $G$.

\begin{lemma} \label{lemma:3.1}
Let $G_{1}, G_{2}$ be abelian groups, and assume that the orders of $G_{1}$ and $G_{2}$ are coprime. Let $H_{1}$ and $H_{2}$ be subgroups of $G_{1}$ and $G_{2}$, respectively. Then, there exists the following isomorphism:
\[
\phi: \Hom(H_{1} \oplus H_{2}, G_{1} \oplus G_{2}) \overset{\cong}{\rightarrow} \Hom(H_{1}, G_{1}) \times \Hom(H_{2}, G_{2}).
\]
\end{lemma}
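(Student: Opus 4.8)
The plan is to realize $\phi$ through the standard decomposition of a homomorphism between direct sums into its four components, and then to show that the two off-diagonal components must vanish precisely because the orders are coprime. First I would fix the structural maps: let $\iota_k : H_k \to H_1 \oplus H_2$ denote the inclusions and $\pi_j : G_1 \oplus G_2 \to G_j$ the projections, for $j,k \in \{1,2\}$. For $F \in \Hom(H_1 \oplus H_2, G_1 \oplus G_2)$ I set $\phi(F) = (\pi_1 \circ F \circ \iota_1,\ \pi_2 \circ F \circ \iota_2)$, which lands in $\Hom(H_1, G_1) \times \Hom(H_2, G_2)$. In the reverse direction I define $\psi(f_1, f_2)$ to be the map $(h_1, h_2) \mapsto (f_1(h_1), f_2(h_2))$. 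Both $\phi$ and $\psi$ are additive with respect to the pointwise abelian group structure on the $\Hom$ sets, so they are group homomorphisms; this is routine bookkeeping.

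Next I would verify the two composites. The identity $\phi \circ \psi = \mathrm{id}$ is immediate from the definitions, since $\pi_j \circ \psi(f_1,f_2) \circ \iota_j = f_j$. The substance of the argument lies in $\psi \circ \phi = \mathrm{id}$, that is, in showing that an arbitrary $F$ is forced to be ``diagonal.'' Writing $F$ through its four components $F_{jk} = \pi_j \circ F \circ \iota_k : H_k \to G_j$, I must show that the cross terms $F_{12} : H_2 \to G_1$ and $F_{21} : H_1 \to G_2$ are zero; once this holds, $F(h_1,h_2) = (F_{11}(h_1), F_{22}(h_2)) = \psi(\phi(F))(h_1,h_2)$, which is exactly what is needed.

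The key step is the vanishing of these cross terms, and this is where the coprimality hypothesis enters. Consider $F_{21} : H_1 \to G_2$. Its image is simultaneously a quotient of $H_1$ and a subgroup of $G_2$, so by Lagrange's theorem the order of $\mathrm{Image}(F_{21})$ divides both $\abs{H_1}$ and $\abs{G_2}$; since $H_1 \le G_1$ gives that $\abs{H_1}$ divides $\abs{G_1}$, and $\gcd(\abs{G_1}, \abs{G_2}) = 1$, this order divides $\gcd(\abs{G_1}, \abs{G_2}) = 1$, forcing $F_{21} = 0$. The same argument applied to $F_{12}$ gives $F_{12} = 0$. I expect this finite-order computation to be the only genuine content of the lemma; everything else is formal manipulation using the universal property of the direct sum (as a coproduct of abelian groups) and of the direct product. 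Once both composites are shown to be the identity, $\phi$ is the claimed isomorphism.
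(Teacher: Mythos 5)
Your proof is correct and is essentially the same argument as the paper's: both reduce the statement to the decomposition of $\Hom(H_1 \oplus H_2, G_1 \oplus G_2)$ into its four components via the product/coproduct universal properties, with the coprimality of $\abs{G_1}$ and $\abs{G_2}$ forcing the cross-term Hom groups to vanish. The only difference is presentational — the paper chains abstract isomorphisms and leaves $\Hom(H_2,G_1) = \Hom(H_1,G_2) = 0$ implicit, whereas you construct $\phi$ and its inverse explicitly and spell out the Lagrange argument for the vanishing.
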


\begin{proof}
From the universality of the direct product of groups, we have
\begin{align*}
\Hom(H_{1} \oplus H_{2}, G_{1} \oplus G_{2}) 
\cong \Hom(H_{1} \oplus H_{2}, G_{1}) \times \Hom(H_{1} \oplus H_{2}, G_{2}).
\end{align*}

Furthermore, considering that $H_{1}, H_{2}, G_{1}, G_{2}$ are abelian groups and that the orders of $G_{1}$ and $G_{2}$ are coprime, we obtain

\begin{align*}
\Hom(H_{1} \oplus H_{2}, G_{1}) &\cong \Hom(H_{1}, G_{1}) \times \Hom(H_{2}, G_{1}) \cong \Hom(H_{1}, G_{1}), \\
\Hom(H_{1} \oplus H_{2}, G_{2}) &\cong \Hom(H_{1}, G_{2}) \times \Hom(H_{2}, G_{2}) \cong \Hom(H_{2}, G_{2}).
\end{align*}
Thus, \cref{lemma:3.1}. has been shown.
\end{proof}
\vskip\baselineskip

\begin{lemma} \label{lemma:3.2}
Let $G$ be a finite abelian group and denote $\Cyc(G)$ as the set of all cyclic subgroups of $G$. Let $G_{1}$ and $G_{2}$ be abelian groups with relatively a power of $p$ orders. Then, for $H_{1} \in X(G_{1})$ and $H_{2} \in \Cyc(G_{2})$, it holds that $H_{1} \oplus H_{2} \in X(G_{1} \oplus G_{2})$. Furthermore, if $H_{1} \in \Cyc(G_{1}) \backslash X(G_{1})$ and $H_{2} \in \Cyc(G_{2}) \backslash X(G_{2})$, then $H_{1} \oplus H_{2} \in \Cyc(G_{1} \oplus G_{2}) \backslash X(G_{1} \oplus G_{2})$ also holds.
\end{lemma}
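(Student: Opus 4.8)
The plan is to reduce the extension problem over $G_{1} \oplus G_{2}$ to the two independent extension problems over $G_{1}$ and $G_{2}$, using \cref{lemma:3.1} together with its analogue for endomorphisms. First I would record that $H_{1} \oplus H_{2}$ is genuinely a cyclic subgroup of $G_{1} \oplus G_{2}$: since $\abs{H_{1}}$ divides $\abs{G_{1}}$ and $\abs{H_{2}}$ divides $\abs{G_{2}}$, the orders of $H_{1}$ and $H_{2}$ are coprime, so by the Chinese remainder theorem $H_{1} \oplus H_{2}$ is cyclic and lies in $\Cyc(G_{1} \oplus G_{2})$. Applying \cref{lemma:3.1} with $H_{i} = G_{i}$ gives $\End(G_{1} \oplus G_{2}) \cong \End(G_{1}) \times \End(G_{2})$, under which an endomorphism $F$ corresponds to a pair $(F_{1}, F_{2})$ acting componentwise, $F(g_{1}, g_{2}) = (F_{1} g_{1}, F_{2} g_{2})$. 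The isomorphism $\phi$ of \cref{lemma:3.1} sends $f \in \Hom(H_{1} \oplus H_{2}, G_{1} \oplus G_{2})$ to $(f_{1}, f_{2})$, where $f_{j}$ is obtained by projecting onto $G_{j}$ and restricting to $H_{j}$; the cross terms $\Hom(H_{1}, G_{2})$ and $\Hom(H_{2}, G_{1})$ vanish by coprimality of orders.

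The crucial step is to verify that these two isomorphisms are compatible with the restriction map $\End(G_{1} \oplus G_{2}) \rightarrow \Hom(H_{1} \oplus H_{2}, G_{1} \oplus G_{2})$. For $F = (F_{1}, F_{2})$ and $(h_{1}, h_{2}) \in H_{1} \oplus H_{2}$ one has $F(h_{1}, h_{2}) = (F_{1} h_{1}, F_{2} h_{2})$, so $\phi(F|_{H_{1} \oplus H_{2}}) = (F_{1}|_{H_{1}}, F_{2}|_{H_{2}})$. Hence $F$ extends $f$ if and only if $F_{1}$ extends $f_{1}$ and $F_{2}$ extends $f_{2}$; equivalently, $f$ admits an extension in $\End(G_{1} \oplus G_{2})$ if and only if $f_{1}$ extends in $\End(G_{1})$ and $f_{2}$ extends in $\End(G_{2})$.

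With this equivalence in hand, both assertions follow at once. For the first, since $H_{1} \in X(G_{1})$ there is a non-extendable $f_{1} \in \Hom(H_{1}, G_{1})$; setting $f = \phi^{-1}(f_{1}, 0)$, its $G_{1}$-component cannot be extended, so neither can $f$, whence $H_{1} \oplus H_{2} \in X(G_{1} \oplus G_{2})$ regardless of the choice of $H_{2} \in \Cyc(G_{2})$. For the second, any $f \in \Hom(H_{1} \oplus H_{2}, G_{1} \oplus G_{2})$ decomposes as $\phi(f) = (f_{1}, f_{2})$; because $H_{1} \notin X(G_{1})$ and $H_{2} \notin X(G_{2})$, both $f_{1}$ and $f_{2}$ extend, hence so does $f$, giving $H_{1} \oplus H_{2} \notin X(G_{1} \oplus G_{2})$.

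I expect the only genuine work to lie in the naturality check of the middle paragraph, namely confirming that $\phi$ intertwines restriction with the componentwise action of endomorphisms; once that compatibility is established, everything else is a formal consequence of \cref{lemma:3.1}. A minor point worth stating carefully is that the coprimality hypothesis is used twice over—once to kill the off-diagonal Hom-groups so that $\phi$ and its endomorphism analogue are well defined, and once to guarantee that $H_{1} \oplus H_{2}$ is itself cyclic.
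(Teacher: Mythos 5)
Your proposal is correct and follows essentially the same route as the paper: both reduce the extension problem via \cref{lemma:3.1} to componentwise extendability over $G_{1}$ and $G_{2}$, then take a non-extendable $f_{1}$ paired with (e.g.) the zero map for the first claim, and decompose an arbitrary $f$ for the second. The compatibility check you highlight --- that $\phi$ intertwines restriction with the componentwise action of $\End(G_{1}) \times \End(G_{2})$, and that $H_{1} \oplus H_{2}$ is indeed cyclic --- is left implicit in the paper, so your write-up is a slightly more careful version of the same argument.
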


\begin{proof}
Let $H_{1} \in X(G_{1})$ and $H_{2} \in \Cyc(G_{2})$. Then, since $H_{1} \in X(G_{1})$, we have
$$
^{\exists} f_{1} \in \Hom(H_{1},G_{1}) \text{ such that } ^{\forall} F_{1} \in \End(G_{1}),\ F_{1} \mid _{H_{1}} \neq f_{1}.
$$
By \cref{lemma:3.1}, any homomorphism $f \in \Hom(H_{1} \oplus H_{2},G_{1} \oplus G_{2})$ can be expressed using $f_{1} \in \Hom(H_{1},G_{1})$ and $f_{2} \in \Hom(H_{2},G_{2})$ as $f = f_{1} \oplus f_{2}$. Therefore, for the chosen $f_{1}$, the homomorphism $f_{1} \oplus f_{2} \in \Hom(H_{1} \oplus H_{2},G_{1} \oplus G_{2})$ cannot be extended to any endomorphism $F \in \End(G_{1} \oplus G_{2})$. Thus, it follows that $H_{1} \oplus H_{2} \in X(G_{1} \oplus G_{2})$. The same argument can be applied to show the claim for the second part.
\end{proof}
\vskip\baselineskip

\begin{theorem}
Let $G = \displaystyle \bigoplus_{i=1}^{n} Z(m_{i}) \cong \bigoplus_{p:prime} G_{p} \ (m_{i} \geq 1)$. Let $c(p) = \# \Cyc(G_{p})$ and $V = \{ p:prime \mid p \text{ divides } m_{1}m_{2}\cdots m_{n} \}$. Then, the following holds:
\begin{align*}
\# X(G) &= \sum_{k=1}^{n} (-1)^{k-1} \sum_{\substack{J \subset V\\ \abs{J}=k}} 
\bigg( \prod_{p \in J} \# X(G_{p}) \bigg)
\bigg( \prod_{p \notin J} c(p) \bigg).
\end{align*}
\end{theorem}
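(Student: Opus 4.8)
The plan is to reduce the computation of $\# X(G)$ to the primary components $G_p$ using the decomposition $G \cong \bigoplus_{p}G_p$, and then to run a straightforward inclusion–exclusion. The whole argument rests on two facts: a bijection between cyclic subgroups of $G$ and tuples of cyclic subgroups of the $G_p$, and the membership criterion supplied by \cref{lemma:3.2}.

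First I would establish that cyclic subgroups of $G$ correspond bijectively to tuples of cyclic subgroups of the $G_p$. Because the orders of the $G_p$ are pairwise coprime, any cyclic subgroup $H = \langle g \rangle$ splits as $H = \bigoplus_{p} H_p$ with $H_p = \langle g_p \rangle \in \Cyc(G_p)$ the $p$-primary part, and conversely any choice of $H_p \in \Cyc(G_p)$ yields a cyclic subgroup $\bigoplus_{p} H_p$, since a direct sum of cyclic groups of pairwise coprime orders is cyclic. This gives $\Cyc(G) \cong \prod_{p} \Cyc(G_p)$ and in particular $\# \Cyc(G) = \prod_{p} c(p)$.

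Next I would upgrade \cref{lemma:3.2} from two primary pieces to arbitrarily many by induction on $\abs{V}$. The two clauses of \cref{lemma:3.2} together say that, for coprime orders, $H_1 \oplus H_2 \in X(G_1 \oplus G_2)$ if and only if $H_1 \in X(G_1)$ or $H_2 \in X(G_2)$. Writing $G = G_p \oplus \bigoplus_{q \neq p} G_q$ and feeding the induction hypothesis into the second factor, I obtain the clean criterion that $H = \bigoplus_{p} H_p \in X(G)$ if and only if $H_p \in X(G_p)$ for at least one $p$; equivalently, $H \in \Cyc(G) \backslash X(G)$ if and only if $H_p \in \Cyc(G_p) \backslash X(G_p)$ for every $p$. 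Note that primes $p \notin V$ contribute a trivial component $G_p$, for which $c(p) = 1$ and $X(G_p) = \emptyset$, so they can be discarded throughout. From this criterion the count of extendable cyclic subgroups factors as $\#\big(\Cyc(G) \backslash X(G)\big) = \prod_{p \in V}\bigl(c(p) - \# X(G_p)\bigr)$, whence
\[
\# X(G) = \prod_{p \in V} c(p) - \prod_{p \in V}\bigl(c(p) - \# X(G_p)\bigr).
\]
Expanding the second product over subsets $J \subseteq V$, the summand for $J$ being $(-1)^{\abs{J}}\bigl(\prod_{p \in J}\# X(G_p)\bigr)\bigl(\prod_{p \notin J} c(p)\bigr)$, the term $J = \emptyset$ cancels $\prod_{p \in V} c(p)$, and regrouping the remaining nonempty subsets by cardinality $k = \abs{J}$ produces the stated formula. (The sum terminates naturally at $k = \abs{V}$, since subsets of larger size do not exist.)

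The main obstacle will be the inductive step: verifying that the "or" criterion of \cref{lemma:3.2} composes correctly across all primes at once, together with the bookkeeping that the trivial components $p \notin V$ never contribute to $X$ and hence may be suppressed. Once that criterion is in place, the factorization of the complement count and the binomial-style expansion are routine, so the conceptual weight of the proof lies entirely in steps one and two.
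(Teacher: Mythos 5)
Your proposal is correct and takes essentially the same route as the paper: the paper's entire proof is the sentence ``This follows from the principle of inclusion--exclusion,'' and your three steps --- the bijection $\Cyc(G)\cong\prod_{p}\Cyc(G_{p})$, the multi-prime ``or'' criterion obtained by iterating \cref{lemma:3.2}, and the expansion of $\prod_{p\in V}c(p)-\prod_{p\in V}\big(c(p)-\#X(G_{p})\big)$ --- are precisely the details that sentence leaves implicit. The only discrepancy is harmless and favors you: your derivation yields the sum over all nonempty $J\subseteq V$ (so $k$ ranges up to $\abs{V}$), which is the precise form of the identity, while the paper's stated upper limit $n$ agrees with it only when $\abs{V}\leq n$.
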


\begin{proof}
This follows from the principle of inclusion-exclusion. Tóth~\cite{Toth} provides a description regarding $c(p)$.
\end{proof}
\vskip\baselineskip

\begin{lemma} \label{lemma:3.4}
Let $G = \displaystyle \bigoplus_{i=1}^{n} Z(p^{m_i})^{\lambda_{i}} \quad (1 \leq m_{i} < m_{i+1})$ be a group that is not homocyclic. Then we have $X(G) \neq \emptyset$.
\end{lemma}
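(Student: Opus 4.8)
The plan is to reduce the statement to producing a single witness in $Y(G)$ and then exhibit one explicitly. Since the exponents satisfy $m_{1} < m_{2} < \cdots < m_{n}$, the group $G$ is homocyclic precisely when $n = 1$; hence the hypothesis that $G$ is not homocyclic is equivalent to $n \geq 2$, which in particular gives $m_{n} > m_{1} \geq 1$, so $m_{n} \geq 2$. Because $\Phi \colon X(G)/{\sim} \to Y(G)$ is a bijection, we have $X(G) \neq \emptyset$ if and only if $Y(G) \neq \emptyset$, so it suffices to find one $\delta \in Y(G)$. By \cref{lemma:2.3} it is enough to exhibit a $\delta = \delta_{1} \oplus \cdots \oplus \delta_{n}$ (with $\delta_{i} \preceq (m_{i}, \ldots, m_{i})$) satisfying condition 2.

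First I would take the candidate $\delta$ whose top block $\delta_{n}$ has one entry equal to $1$ and all others equal to $0$, and whose remaining blocks $\delta_{1}, \ldots, \delta_{n-1}$ are all zero. This is admissible because $1 \leq m_{n}$, and it satisfies $\|\delta_{l}\| = 0$ for $l < n$, $\|\delta_{n}\| = 1$, hence $\|\delta\| = 1$. Under $\Phi^{-1}$ this $\delta$ corresponds to the cyclic subgroup generated by an order-$p$ element supported in a single copy of the top summand $Z(p^{m_{n}})$; morally its defining homomorphism fails to extend because every endomorphism sends such an element into the deepest socle layer of each coordinate, so it can never hit an order-$p$ element of a shorter summand.

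Next I would verify condition 2 directly with the choice $s = 1$. Since $m_{l} \geq m_{1}$ for every $l$, each inner term simplifies as $\max\{m_{l} - \|\delta_{l}\|, m_{1} - \|\delta_{l}\|\} = m_{l} - \|\delta_{l}\|$, so the right-hand side of condition 2 equals $\min_{1 \leq l \leq n}(m_{l} - \|\delta_{l}\|) = \min\{m_{1}, \ldots, m_{n-1}, m_{n} - 1\}$. The strict monotonicity of the $m_{i}$ gives $m_{n} - 1 \geq m_{n-1} \geq \cdots \geq m_{1}$, so this minimum is $m_{1}$. The left-hand side is $\max\{0, m_{1} - \|\delta\|\} = \max\{0, m_{1} - 1\}$, which is strictly less than $m_{1}$ because $m_{1} \geq 1$. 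Thus condition 2 holds, so $\delta \in Y(G)$ and therefore $X(G) \neq \emptyset$.

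The only real subtlety, and the step I would be most careful about, is confirming that $m_{1}$ is genuinely the minimizer on the right-hand side: this uses $m_{n} - 1 \geq m_{n-1}$, which is exactly where the strict inequality $m_{n-1} < m_{n}$ (and hence the failure of homocyclicity, $n \geq 2$) enters. Everything else is a direct substitution into \cref{lemma:2.3}, and no case distinction on whether $m_{1} = 1$ is needed, since the quantity $\max\{0, m_{1} - 1\}$ is handled uniformly. As a sanity check, for $G = Z(p^{2}) \oplus Z(p^{5})$ the witness is $\delta = (0,1)$, corresponding to $\langle (0, p^{4}) \rangle$, which is indeed listed in $X(G)$ in the worked example.
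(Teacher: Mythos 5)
Your proof is correct, but it follows a genuinely different route from the paper's. The paper deduces \cref{lemma:3.4} directly from the counting formula of \cref{thm:1.3}: since $G$ is not homocyclic, $n \geq 2$, so the sum defining $T_{2}(G)$ is nonempty (it contains the term $i=1$, $k=m_{1}$, which exists because $m_{1} < m_{2}$), and that term equals $p^{(m_{1}-1)\lambda_{1}} \cdot \frac{p^{m_{1}(\abs{\lambda}_{n}-\lambda_{1})} - p^{(m_{1}-1)(\abs{\lambda}_{n}-\lambda_{1})}}{p^{m_{1}-1}(p-1)} > 0$; hence $\# X(G) \geq T_{2}(G) > 0$. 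You instead bypass the enumeration entirely and exhibit an explicit witness: the class $\delta$ whose top block $\delta_{n}$ has a single entry $1$ and all other entries zero, verified against condition 2 of \cref{lemma:2.3} with $s = 1$ (your computation that the right-hand side minimum is $m_{1}$ while the left-hand side is $m_{1}-1$ is correct, and the strict inequality $m_{n-1} < m_{n}$ is used exactly where the paper uses $n \geq 2$). Both arguments are legitimate at this point in the paper, since \cref{thm:1.3} precedes \cref{lemma:3.4}; the trade-off is that the paper's proof is a two-line consequence of heavy machinery already in place, whereas yours depends only on the qualitative characterization of $Y(G)$ (\cref{prop:2.1} through \cref{lemma:2.3}), not on the correctness of the full closed-form count, and it produces a concrete non-extendable subgroup — the one generated by an order-$p$ element in a single copy of $Z(p^{m_{n}})$, matching $\langle (0,p^{4}) \rangle$ in the paper's worked example — which makes the failure of cyclic-quasi-injectivity transparent.
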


\begin{proof}
Since $G$ is not homocyclic, we have $n \geq 2$. By \cref{thm:1.3}, we have
\begin{align*}
\# X(G) &\geq T_{2}(G)\\
&\geq p^{(m_{1}-1)\lambda_{1}} \cdot \frac{p^{m_{1}(|\lambda|_{n} - |\lambda|_{1})} - p^{(m_{1}-1)(|\lambda|_{n} - |\lambda|_{1})}}{p^{m_{1}-1}(p-1)} \neq 0.
\end{align*}
\end{proof}

Let $G = \displaystyle \bigoplus_{i=1}^{n} Z(m_{i}) \cong \bigoplus_{p:\text{prime}} G_{p}$ be a group that is not the product of homocyclic groups. By \cref{lemma:3.4}, there exists a prime $p$ such that $X(G_{p}) \neq \emptyset$. Hence, there exists a cyclic subgroup $H_{p} \in X(G_{p})$. 
Furthermore, by \cref{lemma:3.2}, for any prime $q \neq p$, and for any cyclic subgroup $H_{q} \in \Cyc(G_{q})$, we have $H_{p} \oplus \bigoplus_{q \neq p} H_{q} \in X(G)$. Thus, $X(G) \neq \emptyset$, which means that a finite abelian group $G$ that is not homocyclic is not a cyclic-quasi-injective group.
This completes the proof of \cref{thm:1.1}.

\section{Application}

In this section, we will show that $ \# X(G) \slash {\sim}$ corresponds to the sum of the maximum jumps in the associated permutations by appropriately choosing $G$. Although Blecher et al.~\cite{BB} has already known the second equality in \Cref{(4.1)}, we will present another proof.

\begin{proposition} \label{prop:4.1}
Let $G = \displaystyle \bigoplus_{i=1}^{n} Z(p^{i})$ and let $S_{n}$ be the symmetric group of degree $n$. Then, we have
\begin{align} \label{(4.1)}
\# X(G) \slash {\sim} &= \sum_{k=1}^{n-1} k \cdot k! \cdot \left( (k+1)^{n-k} - k^{n-k} \right) = \sum_{\sigma \in S_{n}} \max_{1 \leq i \leq n} \{ \sigma(i) - i \}.
\end{align}
\end{proposition}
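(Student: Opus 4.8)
The plan is to prove the two equalities in \Cref{(4.1)} separately: the first is a direct specialization of \cref{thm:1.3}, and the second is a permutation-counting identity for which I will give a self-contained argument (recovering the fact attributed to Blecher et al.). For the first equality, note that $G = \bigoplus_{i=1}^{n} Z(p^{i})$ is the case $m_{i} = i$, $\lambda_{i} = 1$, so that $\abs{\lambda}_{j} = j$. Then $S_{1}(G) = 0$ since its index range $1 \le k < m_{1} = 1$ is empty, and $S_{3}(G) = 0$ since its range $m_{i} < k < m_{i+1} = m_{i}+1$ contains no integer. In $S_{2}(G)$ the range $m_{i} \le k < m_{i+1}$ forces $k = i$, and there $m_{i}^{\lambda_{i}} = i$, $\prod_{j=1}^{i-1}(m_{j}+1)^{\lambda_{j}} = 2\cdot 3\cdots i = i!$, and $\abs{\lambda}_{n} - \abs{\lambda}_{i} = n-i$. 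Substituting and renaming $i$ as $k$ gives
\[
\# X(G)\slash{\sim} \;=\; S_{2}(G) \;=\; \sum_{k=1}^{n-1} k\cdot k!\cdot\big((k+1)^{n-k} - k^{n-k}\big),
\]
which is the first equality.

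For the second equality I would pass to permutation statistics. Write $M(\sigma) = \max_{1\le i\le n}\{\sigma(i)-i\}$, and for $0 \le d \le n-1$ let $D_{d} = \#\{\sigma\in S_{n} : \sigma(i) \le i+d \text{ for all } i\}$ count the permutations of displacement at most $d$. The central claim is the closed form
\[
D_{d} \;=\; d!\,(d+1)^{\,n-d}.
\]
I would prove this by inserting the values $n, n-1, \ldots, 1$ one at a time. The value $v$ may occupy exactly the positions $i$ with $i \ge v-d$; and since every already-placed value $w > v$ sits in a position that is $\ge \max(1,w-d) \ge \max(1,v-d)$, all previously used positions are themselves admissible for $v$. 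Hence the number of free admissible positions for $v$ equals $(d+1)$ when $v \ge d+1$ and equals $v$ when $v \le d$, independently of the earlier choices. Multiplying these counts over $v = n, \ldots, 1$ yields $(d+1)^{\,n-d}\cdot d!$; as sanity checks, $D_{0}=1$ forces the identity and $D_{n-1} = (n-1)!\,n = n!$ is unconstrained.

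From this formula the number of permutations with $M(\sigma)=k$ is $N_{k} = D_{k} - D_{k-1}$, and using $D_{k} = k!\,(k+1)^{n-k}$ together with $D_{k-1} = (k-1)!\,k^{\,n-k+1} = k!\,k^{\,n-k}$ one gets exactly $N_{k} = k!\big((k+1)^{n-k} - k^{n-k}\big)$. Therefore
\[
\sum_{\sigma\in S_{n}} M(\sigma) \;=\; \sum_{k=0}^{n-1} k\,N_{k} \;=\; \sum_{k=1}^{n-1} k\cdot k!\cdot\big((k+1)^{n-k} - k^{n-k}\big),
\]
which matches the expression coming from \cref{thm:1.3} and closes the proof. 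The main obstacle is the counting identity $D_{d} = d!\,(d+1)^{\,n-d}$; everything else is bookkeeping. The single delicate point there is verifying that, at each insertion step, every earlier-placed value already occupies one of the current value's admissible positions, since this is precisely what makes the per-step count clean and lets the product formula telescope into $D_{k}-D_{k-1} = N_{k}$.
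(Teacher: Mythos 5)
Your proof is correct, but for the second equality it takes a genuinely different route from the paper. The first equality is handled identically in both: specialize \cref{thm:1.3} to $m_i = i$, $\lambda_i = 1$, note $S_1(G) = S_3(G) = 0$ for the range reasons you give, and read off $S_2(G)$. For the second equality, the paper builds an explicit combinatorial bridge between $Y(G)$ and $S_n$: a Lehmer-code-type bijection $\Psi\colon S_n \to W_n$ satisfying $\|\Psi(\sigma)\| = \max_i\{\sigma(i)-i\}$ (\cref{lemma:4.2}), together with a surjection $\Omega\colon Y(G) \to W_n'\setminus\{(0,\dots,0)\}$ whose fiber over $\tau$ has exactly $\|\tau\|$ elements, so that $\#Y(G) = \sum_{\tau\in W_n}\|\tau\| = \sum_\sigma \max_i\{\sigma(i)-i\}$ without ever computing either side in closed form. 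You instead prove the permutation identity directly: you show $D_d = \#\{\sigma : \sigma(i)\le i+d \ \forall i\} = d!\,(d+1)^{n-d}$ by a decreasing-value insertion argument (which is sound --- the key observation that previously placed larger values always occupy positions admissible for the current value is exactly what makes the per-step counts independent of prior choices), then telescope $N_k = D_k - D_{k-1}$ and sum $k\,N_k$. Your approach is more self-contained and elementary on the permutation side; in effect you re-derive the result the paper attributes to Blecher et al.~\cite{BB}, so both sides of \Cref{(4.1)} are independently reduced to the same closed form. What it does not provide, and what the paper's approach buys, is a structural explanation of the coincidence: the paper's stated goal is precisely to give ``another proof'' of the known permutation identity by exhibiting the correspondence with $Y(G)$, whereas your argument proves the proposition by matching two separately computed formulas. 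Both are complete and valid proofs of the statement.
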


To show this, we will use two lemmas in the following.
\vskip\baselineskip

\begin{lemma} \label{lemma:4.2}
Let $W_{n} = \{ \tau \in \Z_{\geq 0}^{n} \mid \tau \preceq (n-1,n-2,\cdots,0) \}$. Furthermore, for $\sigma \in S_{n}$, we define
\[
\tau_{\sigma,i} = \# \{ j \mid i \leq j,\ \sigma(j) < \sigma(i) \}.
\]
Then, by defining $\Psi: S_{n} \rightarrow W_{n}$ as
\[
\Psi(\sigma) = \tau_{\sigma} = (\tau_{\sigma,i})_{i=1}^{n},
\]
we have that $\Psi$ is a bijection. Furthermore, for $\sigma \in S_{n}$, we have 
\[
\|\Psi(\sigma)\| = \max_{1 \leq i \leq n} \{ \sigma(i) - i \}.
\]
\end{lemma}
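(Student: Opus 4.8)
The statement is Lemma 4.2, which makes two claims: that $\Psi: S_n \to W_n$ is a bijection, and that $\|\Psi(\sigma)\| = \max_{1 \le i \le n}\{\sigma(i) - i\}$. I would handle these two claims separately, as they require different ideas.

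For the bijection, my plan is to first verify that $\Psi$ is well-defined, i.e. that $\tau_\sigma \in W_n$. The quantity $\tau_{\sigma,i} = \#\{ j \mid i \le j,\ \sigma(j) < \sigma(i)\}$ counts how many indices at or after position $i$ carry a smaller value than $\sigma(i)$; since there are only $n-i$ indices strictly after $i$ (the index $j=i$ never contributes, as $\sigma(i) < \sigma(i)$ is false), we get $0 \le \tau_{\sigma,i} \le n-i$, so $\tau_\sigma \preceq (n-1, n-2, \ldots, 0)$. Counting cardinalities, $\# W_n = \prod_{i=1}^n (n-i+1) = n! = \# S_n$, so it suffices to prove injectivity \emph{or} surjectivity. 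I expect the cleanest route is to exhibit an explicit inverse, reconstructing $\sigma$ from $\tau_\sigma$ by reading the coordinates from $i=n$ down to $i=1$: the value $\tau_{\sigma,i}$ tells us the rank of $\sigma(i)$ among the values $\{\sigma(i), \sigma(i+1), \ldots, \sigma(n)\}$ (specifically, $\sigma(i)$ is the $(\tau_{\sigma,i}+1)$-th smallest of these), which lets one recover the relative order and hence $\sigma$ itself by a standard insertion/Lehmer-code argument. This is essentially the statement that $\tau_\sigma$ is a variant of the Lehmer code, so I would either cite that or spell out the recursive reconstruction.

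For the norm identity, the plan is to show $\|\tau_\sigma\| = \max_i\{\sigma(i)-i\}$ by proving both inequalities. The key observation is a reinterpretation of $\tau_{\sigma,i}$: if I write $a_i = \#\{ j \mid j < i,\ \sigma(j) > \sigma(i)\}$ (inversions coming \emph{before} $i$ with larger value) and note that among all $\sigma(i)-1$ values smaller than $\sigma(i)$, each occupies some position, I can relate $\tau_{\sigma,i}$ to $\sigma(i)$ and $i$. Concretely, the values less than $\sigma(i)$ number $\sigma(i)-1$; those sitting at positions $\le i$ number $(i-1) - a_i$ where $a_i$ counts the larger-valued earlier entries, hence $\tau_{\sigma,i} = (\sigma(i)-1) - \big((i-1) - a_i\big) = \sigma(i) - i + a_i$. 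Since $a_i \ge 0$ always, this gives $\tau_{\sigma,i} \ge \sigma(i) - i$ for every $i$, so $\|\tau_\sigma\| \ge \max_i\{\sigma(i)-i\}$. For the reverse inequality I would pick an index $i^*$ achieving $\tau_{\sigma,i^*} = \|\tau_\sigma\|$ and argue that at such a maximizing index the excess term $a_{i^*}$ vanishes, so $\tau_{\sigma,i^*} = \sigma(i^*) - i^* \le \max_i\{\sigma(i)-i\}$.

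The main obstacle I anticipate is this last point — showing the extra term $a_i$ drops out precisely where $\tau_{\sigma,i}$ is maximized. The idea is that if $a_{i^*} > 0$, there is some earlier position $j < i^*$ with $\sigma(j) > \sigma(i^*)$, and one should be able to show that $\tau_{\sigma,j} > \tau_{\sigma,i^*}$ (roughly, $j$ sees everything $i^*$ sees that is smaller than $\sigma(i^*)$, namely at least $\tau_{\sigma,i^*}$ such entries among positions $\ge i^* > j$, plus the entry at position $i^*$ itself since $\sigma(i^*) < \sigma(j)$), contradicting maximality. Making this comparison airtight — ensuring no double-counting and that the strict gain is genuine — is the delicate combinatorial step; the rest is bookkeeping. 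Alternatively, one could bypass $a_i$ entirely by a direct two-sided argument on the position realizing the maximal jump, but I expect the inversion-count identity $\tau_{\sigma,i} = \sigma(i)-i+a_i$ to be the most transparent scaffold.
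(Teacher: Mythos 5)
Your proposal is correct. On the norm identity---the substantive half of the lemma---your argument coincides with the paper's in content: the paper takes the largest maximizing index $i_0$ of $\tau_\sigma$, proves by exactly your contradiction (any $j<i_0$ with $\sigma(j)>\sigma(i_0)$ would give $\tau_{\sigma,j}\geq\tau_{\sigma,i_0}+1$, the strict gain coming from the index $i_0$ itself) that no earlier entry is larger, deduces $\tau_{\sigma,i_0}=\sigma(i_0)-i_0$ by the same count, and then shows $\tau_{\sigma,i}\geq\sigma(i)-i$ pointwise to finish. The difference is organizational: you route everything through the single identity $\tau_{\sigma,i}=\sigma(i)-i+a_i$ with $a_i=\#\{\,j<i \mid \sigma(j)>\sigma(i)\,\}$, which makes the lower bound immediate from $a_i\geq 0$ and isolates the only real work in showing $a_{i^*}=0$ at a maximizer; this removes the paper's case split between $i<i_0$ and $i>i_0$ and is a genuine streamlining. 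Your worry about making the maximizer step airtight is unfounded: the comparison you sketch already is, since every position counted by $\tau_{\sigma,i^*}$ is counted by $\tau_{\sigma,j}$ and the position $i^*$ itself is counted only by the latter, so there is no double counting. Where you diverge more is bijectivity: the paper proves injectivity directly (at the first index where $\sigma$ and $\sigma'$ differ, the set of later smaller values for the permutation with the smaller entry is a proper subset of the corresponding set for the other, so $\tau_{\sigma,i_0}<\tau_{\sigma',i_0}$) and then gets surjectivity from $\#W_n=n!=\#S_n$, whereas you propose an explicit Lehmer-code inverse via right-to-left reconstruction. Both are sound; the paper's route stays self-contained with no reconstruction to verify, while yours identifies $\Psi$ as (a variant of) the Lehmer code, yields a constructive inverse, and makes the cardinality count optional.
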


\begin{proof}
For each $i$, since $0 \leq \tau_{\sigma,i} \leq n-i$, it follows that $\tau_{\sigma} \in W_{n}$, which means that $\Psi$ is well-defined.

Next, we will show that $\Psi$ is injective. Let $\sigma, \sigma' \in S_{n}$ be distinct. We take the smallest index $i_{0}$ such that $\sigma(i_{0}) \neq \sigma'(i_{0})$. Without loss of generality, we can assume that $\sigma(i_{0}) < \sigma'(i_{0})$. For $i < i_{0}$, since $\sigma(i) = \sigma'(i)$, we have
\begin{align} \label{(4.2)}
\{ \sigma(i_{0}), \cdots, \sigma(n) \} = \{ \sigma'(i_{0}), \cdots, \sigma'(n) \}
\end{align}
From this, we obtain
\begin{align*}
\{ \sigma(j) \mid i_{0} \leq j , \sigma(j) < \sigma(i_{0}) \}
&= \{ \sigma'(j) \mid i_{0} \leq j, \sigma'(j) < \sigma(i_{0}) \} \\
&\subset \{ \sigma'(j) \mid i_{0} \leq j, \sigma'(j) < \sigma'(i_{0}) \}
\end{align*}
From \Cref{(4.2)}, since $\sigma(i_{0}) \in \{ \sigma'(i_{0}), \cdots, \sigma'(n) \}$, using $\sigma(i_{0}) < \sigma'(i_{0})$, we have
\[
\sigma(i_{0}) \in \{ \sigma'(j) \mid i_{0} \leq j, \sigma'(j) < \sigma'(i_{0}) \}.
\]
On the other hand, we also have 
\[
\sigma(i_{0}) \notin \{ \sigma(j) \mid i_{0} \leq j, \sigma(j) < \sigma(i_{0}) \},
\]
which implies
\[
\{ \sigma(j) \mid i_{0} \leq j, \sigma(j) < \sigma(i_{0}) \} \subsetneq
\{ \sigma'(j) \mid i_{0} \leq j, \sigma'(j) < \sigma'(i_{0}) \}.
\]
Therefore, we have
\begin{align*}
\tau_{\sigma,i_{0}} 
&= \# \{ j \mid i_{0} \leq j, \sigma(j) < \sigma(i_{0}) \} \\
&= \# \{ \sigma(j) \mid i_{0} \leq j, \sigma(j) < \sigma(i_{0}) \} \\
&< \# \{ \sigma'(j) \mid i_{0} \leq j, \sigma'(j) < \sigma'(i_{0}) \} \\
&= \# \{ j \mid i_{0} \leq j, \sigma'(j) < \sigma'(i_{0}) \} = \tau_{\sigma',i_{0}}.
\end{align*}
Thus, $\Psi$ is injective.

Now we will show that $\Psi$ is surjective. Since $\Psi$ is injective, we have $\# \Psi(S_{n}) = \# S_{n} = n!$. Since $\# W_{n} = n!$, it follows that $\# \Psi(S_{n}) = \# W_{n}$, which implies that $\Psi$ is surjective.

Let $\| \tau_{\sigma} \| = \tau_{\sigma,i}$ and define $i_{0}$ as the maximum $i$ for which $\| \tau_{\sigma} \| = \tau_{\sigma,i}$. We want to show that
\[
\| \tau_{\sigma} \| = \displaystyle \max_{1 \leq i \leq n}\{\sigma(i) - i\} = \sigma(i_{0}) - i_{0}.
\]
First, let us prove that $\sigma(i) < \sigma(i_{0})$ for any $i < i_{0}$. Suppose that $\sigma(i) > \sigma(i_{0})$, then
\begin{align*}
\tau_{\sigma,i} &= \# \{ j \mid i \leq j, \sigma(j) < \sigma(i) \}\\
&> \# \{ j \mid i_{0} \leq j, \sigma(j) < \sigma(i_{0}) \} = \tau_{\sigma,i_{0}},
\end{align*}
which contradicts the fact that $\| \tau_{\sigma} \| = \tau_{\sigma,i_{0}}$. Thus, we have $\sigma(i) < \sigma(i_{0})$ for any $i < i_{0}$.
Next, it follows that
\[
\tau_{\sigma,i_{0}} = n - (n-\sigma(i_{0})) - (i_{0}-1) -1 = \sigma(i_{0}) - i_{0}.
\]
Finally, we need to show that if $i > i_{0}$, then $\sigma(i_{0}) - i_{0} > \sigma(i) - i$, and if $i < i_{0}$, then $\sigma(i_{0}) - i_{0} \geq \sigma(i) - i$. We will only prove the former case, since the same method applies to the later case. For $i > i_{0}$, the number of elements in the set $\{ \sigma(i+1), \cdots, \sigma(n) \}$ that are greater than $\sigma(i)$ is at most $n-\sigma(i)$. Therefore, by the choice of $i_{0}$,
\begin{align*}
\sigma(i_{0}) - i_{0} &= \tau_{\sigma, i_{0}} > \tau_{\sigma, i}\\
&> (n-i) - (n-\sigma(i)) = \sigma(i) - i,
\end{align*}
proving the claim. Hence, \cref{lemma:4.2} has been shown.
\end{proof}
\vskip\baselineskip

\begin{lemma}
Let $W'_{n} = \{ \tau \in \Z_{\geq 0}^{n} \mid \tau \preceq (0,1,\cdots,n-1) \}$ and define $\rho:W'_{n} \rightarrow W_{n}$ by $\rho((\tau_{1},\tau_{2},\cdots,\tau_{n})) = (\tau_{n},\tau_{n-1},\cdots,\tau_{1})$. 
Define $\Omega:Y(G) \rightarrow W'_{n} \backslash \{ (0,\cdots,0) \}$ as
\[
\Omega(\delta) = (0,\delta_{1},\cdots, \hat{\delta}_{\| \delta \|},\cdots, \delta_{n}),
\]
then $\Omega$ is surjective. Here, $\hat{\delta}_{k}$ means excluding $\delta_{k}$. Furthermore, for any $\tau \in W'_{n} \backslash \{ (0,\cdots,0)\}$,
\[
\# \Omega^{-1}(\tau) = \| \tau \|
\]
holds.
\end{lemma}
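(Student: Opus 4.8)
The plan is to begin by making condition 3 of \cref{prop:2.4} completely explicit in the present case, where $m_i = i$ and every $\lambda_i = 1$, so that each block $\delta_i$ is a single nonnegative integer subject to $\delta_i \leq i$. Since $m_s = s$, for $\|\delta\| \geq 1$ we have $f(\delta) = \max\{s : s \leq \|\delta\|\} = \|\delta\|$, whence $m_{f(\delta)} = \|\delta\|$ \emph{always} holds; thus case (3) of condition 3 never arises, case (1) merely forces $\|\delta\| \geq 1$, and case (2) reduces to $\delta_{\|\delta\|} < \|\delta\|$. This identifies
\[
Y(G) = \{\, \delta \preceq (1,2,\ldots,n) \mid \|\delta\| \geq 1,\ \delta_{\|\delta\|} < \|\delta\| \,\}.
\]
The structural fact I would isolate and use repeatedly is that, writing $k = \|\delta\|$, the bounds $\delta_j \leq j$ together with $\delta_k < k$ force $\delta_j < k$ for every $j \leq k$; hence the maximal value $k$ is attained only at indices $j > k$, and in particular $k \leq n-1$.

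Next I would check that $\Omega$ is well-defined, namely that $\Omega(\delta) \in W'_n \setminus \{(0,\ldots,0)\}$. The coordinate deleted by $\Omega$ is $\delta_{\|\delta\|}$, which by the structural fact is \emph{not} a position where the maximum is attained; consequently the value $k = \|\delta\|$ survives into $\Omega(\delta)$, and since prepending a $0$ cannot increase a maximum, $\|\Omega(\delta)\| = \|\delta\| = k \geq 1$, excluding the zero tuple. The membership $\Omega(\delta) \in W'_n$ I would verify by the coordinate bounds $\tau_j \leq j-1$: the leading $0$ settles $j = 1$; for $2 \leq j \leq k$ the $j$-th entry is $\delta_{j-1} \leq j-1$; and for $j > k$ the $j$-th entry is $\delta_j \leq k \leq j-1$, using $\delta_j \leq \|\delta\| = k < j$.

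Finally I would compute the fibers, which delivers surjectivity at the same time. Fix $\tau \in W'_n \setminus \{(0,\ldots,0)\}$, so $\tau_1 = 0$ automatically and $\|\tau\| \geq 1$. For any $\delta \in \Omega^{-1}(\tau)$, the previous paragraph gives $\|\delta\| = \|\Omega(\delta)\| = \|\tau\|$, so the deleted index is \emph{forced} to be $k := \|\tau\|$. Inverting the shift then pins down every coordinate but the deleted one: $\delta_i = \tau_{i+1}$ for $1 \leq i \leq k-1$ and $\delta_j = \tau_j$ for $k+1 \leq j \leq n$, while $\delta_k$ remains free. I would then confirm that each choice $\delta_k \in \{0,1,\ldots,k-1\}$ yields a bona fide element of $Y(G)$: the bounds $\delta_i \leq i$ follow from $\tau \in W'_n$; the multiset $\{\delta_i : i \neq k\}$ equals $\{\tau_2,\ldots,\tau_n\}$, whose maximum is $\|\tau\| = k$ (dropping $\tau_1 = 0$ is harmless as $k \geq 1$), so with $\delta_k < k$ we obtain $\|\delta\| = k$ and $\delta_{\|\delta\|} = \delta_k < k$, i.e.\ condition 3. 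Conversely any $\delta_k \geq k$ is inadmissible, violating $\delta_{\|\delta\|} < \|\delta\|$. Hence $\Omega^{-1}(\tau)$ corresponds bijectively to $\{0,1,\ldots,k-1\}$, giving $\#\Omega^{-1}(\tau) = k = \|\tau\|$; nonemptiness of every fiber is exactly surjectivity.

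I expect the main obstacle to be the index bookkeeping in the reconstruction—keeping the left-of-deletion shift $\delta_i = \tau_{i+1}$ consistent with the right-of-deletion identity $\delta_j = \tau_j$—and, conceptually, establishing that the deleted index must equal $\|\tau\|$. Both difficulties are resolved by the structural fact from the first paragraph, that the maximum of a $\delta \in Y(G)$ never sits at index $\|\delta\|$: this guarantees the maximum survives $\Omega$ and simultaneously forces $k = \|\tau\|$ in the fiber count. The reversal $\rho$ plays no role in this particular statement and is presumably recorded for later use.
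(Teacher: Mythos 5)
Your proposal is correct and takes essentially the same approach as the paper's proof: both reduce condition 3 of \cref{prop:2.4} (with $m_{i}=i$, $\lambda_{i}=1$) to the single requirement $\delta_{\|\delta\|}<\|\delta\|$ and then identify the fiber explicitly as $\Omega^{-1}(\tau)=\{(\tau_{2},\ldots,\tau_{\|\tau\|},k,\tau_{\|\tau\|+1},\ldots,\tau_{n}) \mid 0\le k<\|\tau\|\}$. You merely spell out what the paper leaves implicit, namely the well-definedness of $\Omega$ and the observation that the maximum of $\delta\in Y(G)$ never occurs at index $\|\delta\|$, so it survives deletion and forces the deleted index to equal $\|\tau\|$.
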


\begin{proof}
Applying \cref{prop:2.4} to $G$, for $\delta \in Y(G)$, it holds that
\[
m_{\| \delta \|} = \| \delta \| \text{ implies } \delta_{\| \delta\|} < \| \delta \|.
\]
Thus, 
\[
\Omega^{-1}(\tau) = \{ (\tau_{2},\cdots,\tau_{\| \tau \|}, k ,\tau_{\| \tau \|+1},\cdots ,\tau_{n}) \mid 0 \leq k < \| \tau \| \}
\]
holds, which means that $\# \Omega^{-1}(\tau) = \| \tau \|$.
\end{proof}
\vskip\baselineskip

From \cref{lemma:4.2}, it follows that
\begin{align*}
\# Y(G) &= \# \bigsqcup_{\tau' \in W'_{n} \backslash \{ (0,\cdots,0)\}} \Omega^{-1}(\tau') = \sum_{\tau' \in W'_{n} \backslash \{ (0,\cdots,0)\}} \# \Omega^{-1}(\tau') = \sum_{\tau' \in W'_{n} } \| \tau' \|\\
&= \sum_{\rho^{-1}(\tau') \in W_{n}} \| \rho^{-1}(\tau') \|
= \sum_{\tau \in W_{n}} \| \tau \| = \sum_{\sigma \in S_{n}} \max_{1 \leq i \leq n} \{ \sigma(i) - i\}.
\end{align*}

In \cref{thm:1.3}, setting $m_{i} = i$ and $\lambda_{i} = 1$, we have
\begin{align*}
S_{1}(G) &= S_{3}(G) = 0 \\
S_{2}(G) &= \sum_{i=1}^{n-1} i \cdot \bigg( \prod_{j=1}^{i-1} (j+1) \bigg) \bigg( (i+1)^{n-i} - i^{n-i} \bigg) \\
&= \sum_{k=1}^{n-1} k \cdot k! \cdot \big( (k+1)^{n-k} - k^{n-k} \big).
\end{align*}
Then, we have
\begin{align*}
\# Y(G) = S_{1}(G) + S_{2}(G) + S_{3}(G) = \sum_{k=1}^{n-1} k \cdot k! \cdot \big( (k+1)^{n-k} - k^{n-k} \big).
\end{align*}
Thus, \cref{prop:4.1} has been shown.

\section*{Acknowledgement}
The author would like to thank Katsumi Kina for reviewing the manuscript and providing helpful suggestions and proofreading. The author is also grateful to Taichi Hosotani for his valuable advice during the preparation of this paper, and to Professors Ochiai, Gon, and Shirai for their insightful guidance.


\end{document}